\newtheorem{theorem}{Theorem}
\newtheorem{proposition}[theorem]{Proposition}
\newtheorem{remark}[theorem]{Remark}
\newtheorem{lemma}[theorem]{Lemma}
\newtheorem{definition}[theorem]{Definition}
\begin{document}

\title{Ensemble control of n-level quantum systems with a scalar control} 
\date{}
\author{Ruikang Liang, Ugo Boscain,  and~Mario~Sigalotti
\thanks{The three authors are with
Laboratoire Jacques-Louis Lions, Sorbonne Universit\'e, Universit\'e de
Paris, CNRS, Inria, Paris, FranceSorbonne Universit\'e, Universit\'e de
Paris Cit\'e, CNRS, INIRIA, Laboratoire Jacques-Louis Lions, LJLL, F-75005 Paris, France
  {\tt 
   ruikang.liang@sorbonne-universite.fr, 
  ugo.boscain@sorbonne-universite.fr,
mario.sigalotti@inria.fr}.
 }
}
\maketitle

\begin{abstract}                          
 In this paper we discuss how a general bilinear finite-dimensional closed quantum system with dispersed parameters can be steered between eigenstates. 
      We show that, under suitable conditions on the separation of spectral gaps and the boundedness of parameter dispersion, rotating wave and adiabatic approximations can be employed in cascade to achieve population inversion between arbitrary eigenstates. We propose an explicit control law and test numerically the sharpness of the conditions on several examples. 
\end{abstract}
 \section{Introduction}
Let us consider a continuum of $n$-level closed quantum systems described by the Schr\"odinger equation
\begin{equation}
\label{system:n_level}
    i\dot{\psi}(t)=(H(\alpha)+\omega(t)H_{c}{ (\delta)})\psi(t),
\end{equation}
where $\omega(\cdot)$ is a real-valued control. Here the Hamiltonian $H(\alpha)$ is determined by an unknown parameter $\alpha$ taking values in a closed and connected subdomain $\mathcal{D}$ of $\mathbb{R}^{m}$.
We assume that $H(\alpha)$ 
has the structure 
\begin{equation*}
    H(\alpha)=
    \begin{pmatrix}
        \lambda_{1}(\alpha) &0 &\dots  & 0\\
        0& \lambda_{2}(\alpha) & 0 &\vdots\\
        \vdots&0&\ddots&0\\
        0&\dots&0&\lambda_{n}(\alpha)
   \end{pmatrix},
\end{equation*}
where $\lambda_{j}:\mathcal{D}\rightarrow\mathbb{R}$ is a continuous function for each $j=1,\dots,n$.
The matrix $H_{c}$ is 
self-adjoint 
and 
describes the control coupling
between the eigenstates of the system. It has the form 
 \begin{equation*} H_{c}=
{\begin{pmatrix}
         \delta_{11} & \delta_{12} & \dots &\delta_{1n}\\
         \delta_{12} & \delta_{22} & \ddots &  \vdots \\
         \vdots &\ddots & \ddots& \delta_{n-1,n}
         \\         \delta_{1n} &\cdots &\delta_{n-1,n} & \delta_{n,n}
     \end{pmatrix}}
 \end{equation*}
where the $\delta_{jk}$ are real parameters.
We assume that each $\delta_{jk}$ is unknown but that it belongs to some closed interval 
$\mathcal{I}_{jk}=\left[\delta_{jk}^{0},\delta_{jk}^{1}\right]$ 
in $\mathbb{R}$.
We are going to consider the 
\textit{ensemble control problem} consisting in steering the continuum of dynamics \eqref{system:n_level} with the same control signal, independent of $\alpha$ and of the $\delta_{jk}$.

{Our main tool to tackle the problem is the adiabatic theory which states that, for a slowly varying Hamiltonian, the spectral subspaces 
are approximately invariant under the associated dynamics (see \cite{Teufel}). 

The problem of ensemble controllability for quantum systems has been mainly developed for two-level systems steered by two 
real-valued
controls. In experimental situations it is often takled  via direct optimization methods (see, for instance, \cite{long000,glaser2015training} and also the review papers \cite{rev1,rev2}). The mathematical aspects of the ensemble controllability problem have been addressed, in particular, in \cite{BeauchardCoronRouchon,li2006ensemble,li2009ensemble}. In these papers the authors show that a uniform control can be constructed to steer the system from a common initial state to an arbitrary set of target states continuously parameterized by the uncertainties of the system, i.e., $(\alpha,\delta)$. Let us also mention  \cite{AugierSIAM,AugierMCRF,LeghtasSarletteRouchon}
for some further results on ensemble control of quantum systems by adiabatic motion
and \cite{ChittaroGauthier,Netoetal} for  related ensemble stabilization problems for two-level quantum systems. 
For systems with a single scalar input, as described in system \eqref{system:n_level}, the strategies used in the papers above cannot be applied directly due to the lack of degrees of freedom. In the physics literature, the most common way to approximate an effective Hamiltonian with several degrees of freedom using a single input is through the rotating wave approximations.
However, this approach is not always compatible with the ensemble adiabatic motion, as observed in \cite{ROBIN2022414}.
The article \cite{ROBIN2022414} proposes a rigorous constructive approach that combines, in a suitable range of dispersion parameters, rotating wave and adiabatic approximations, extending the results for 2-level systems \cite{BeauchardCoronRouchon,li2006ensemble,li2009ensemble}  to the case of scalar controls. Results for the ensemble control of $n$-level systems with a scalar input have been obtained in \cite{augier2022effective}. The latter article uses a different combination of adiabatic and rotating wave approximations, which is robust with respect to coupling strengths in $H_{c}(\delta)$ but not to dispersions in the frequencies of $H(\alpha)$.
Up to now the problem of ensemble control of a $n$-level systems with a scalar control with dispersions in the frequencies of $H(\alpha)$ remained open. This is what we are studying in the present paper by extending the approach developed 
in \cite{ROBIN2022414}.
 The analysis is here much more complicated since the presence of several energy levels 
increases the possible resonances.}

We employ the rotating wave approximation and an adiabatic approximation in cascade to realise population inversion between an arbitrary pair of eigenstates of $H(\alpha)$. High-order averaging results are necessary in both the rotating wave and adiabatic steps. Otherwise 
the fidelity of their cascade can fail to converge to 1 due to the competing time scales of the two approximations. We highlight the importance of having no overlaps between the uncertainty intervals of each frequency and that, assuming further higher-order conditions on the resonance frequencies, it is possible to steer the system with higher precision.

The article is organized as follows: In Section~\ref{sec:results}, we state the main result (Theorem~ \ref{theorem:main}) and we provide an explicit construction of the control law. 
We provide a sketch of the main steps of the proof of Theorem~ \ref{theorem:main} in Section~\ref{sec:sketch-proof}.
The proof itself is given in  Section~\ref{sec:proof}, 
where fast-oscillating terms are first eliminated through successive time-dependent changes of variables and rotating wave approximations. A non-standard adiabatic theorem is then applied to the system. In Section~\ref{sec:numerical}, we test numerically the sharpness of the proposed conditions in the case of a 4-level system.

\section{Results}\label{sec:results}
To realize a population inversion between two arbitrary eigenstates, we will consider a time scale $\epsilon_{1}$ for the rotating wave approximation and another time scale $\epsilon_{2}$ for the adiabatic following. The control law in our algorithm will be a chirped pulse of the type
\begin{equation}
\label{eq:control}
    \omega_{\epsilon_{1},\epsilon_{2}}(t)=2\epsilon_{1}u(\epsilon_{1}\epsilon_{2}t)\cos\left(\int_{0}^{t}f(\epsilon_1\epsilon_2\tau)\text{d}\tau\right),
\end{equation}
where $u,f:[0,T]\to \mathbb{R}$ are functions to be chosen. The goal is to induce 
a transition from the initial state $\textbf{e}_{p}$ towards a state of the form $\exp(i\theta)\textbf{e}_{q}$ in time $T/(\epsilon_{1}\epsilon_{2})$
when $\epsilon_{1}$ and $\epsilon_{2}$ are small.

In the following, we will denote by $\left\{\textbf{e}_{j}\right\}_{j\in\{1,\dots,n\}}$ the canonical basis of $\mathbb{R}^{n}$ and $\left\{\textbf{e}_{jk}\right\}_{(j,k)\in\{1,\dots,n\}^2}$ the canonical basis of $M_{n}(\mathbb{R})$ (space of real $n\times n$ matrices). 
\begin{theorem}\label{theorem:main}
    Let us assume that 
    for all $1\leq j<k\leq n$, and for all $\alpha\in \mathcal{D}$,  
    $\lambda_{k}(\alpha)-\lambda_{j}(\alpha)>0$. 
Fix  $1\leq p<q\leq n$. Assume that $\delta_{pq}$ belongs to a closed interval $\mathcal{I}_{pq}=\left[\delta_{pq}^{0},\delta_{pq}^{1}\right]$ such that $0\notin\mathcal{I}_{pq}$ and there exist  $0<v_{0}<v_{1}$ such that 
\begin{enumerate}
    \item \label{cond_1}For all $\alpha\in\mathcal{D}$, $\lambda_{q}(\alpha)-\lambda_{p}(\alpha)\in(v_0,v_1)$;
    \item \label{cond_2}For all $1\leq j<k\leq n$ such that $(j,k)\neq(p,q)$ and all $\alpha\in\mathcal{D}$, we have $\lambda_{k}(\alpha)-\lambda_{j}(\alpha)\notin\left[v_0,v_1\right]$.
\end{enumerate}
    Fix $T>0$ and take $u,f\in\mathcal{C}^{2}([0,T],\mathbb{R})$ such that
    \begin{enumerate}
        \item[i)] $(u(0),f(0))=(0,v_{0})$ and $(u(T),f(T))=(0,v_{1})$;
        \item[ii)] $\forall s\in (0,T),u(s)>0$ and $\forall s\in[0,T],\dot{f}(s)>0$.
    \end{enumerate}
    Denote by $\psi_{\epsilon_{1},\epsilon_{2}}$ the solution of \eqref{system:n_level} with  initial condition $\psi_{\epsilon_{1},\epsilon_{2}}(0)=\textbf{e}_{p}$ and the control law $\omega_{\epsilon_{1},\epsilon_{2}}$ as in \eqref{eq:control}. Then there exist $C>0$ and $\eta>0$ such that for every $\alpha\in\mathcal{D}$ and every $(\epsilon_{1},\epsilon_{2})\in(0,\eta)^{2}$,
\begin{equation*}
\begin{aligned}
\Big\|\psi_{\epsilon_{1},\epsilon_{2}}\left(\frac{T}{\epsilon_1\epsilon_2}\right)-\exp(i\theta)\textbf{e}_{q}\Big\|\leq C(\epsilon_2\epsilon_1^{-1}+\epsilon_{1}^{3/2}\epsilon_2^{-1/2}+\epsilon_1+\epsilon_1^{5/2}\epsilon_2^{-3/2}),
\end{aligned}
\end{equation*}
for some $\theta\in\mathbb{R}$.
\end{theorem}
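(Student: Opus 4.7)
The plan is to follow the strategy advertised in the introduction: combine a rotating wave approximation on the time scale $1/\epsilon_1$ with an adiabatic theorem on the slower time scale $1/(\epsilon_1\epsilon_2)$, both carried to sufficient order so that the two errors can be composed. First I move to the interaction picture with respect to $H(\alpha)$ by setting $\phi(t)=e^{iH(\alpha)t}\psi(t)$. Writing $\cos\Phi(t)=\tfrac12(e^{i\Phi(t)}+e^{-i\Phi(t)})$ with $\Phi(t)=\int_0^t f(\epsilon_1\epsilon_2\tau)\,\text{d}\tau$, the Schrödinger equation takes the form
\begin{equation*}
i\dot\phi_j=\epsilon_1\,u(\epsilon_1\epsilon_2 t)\sum_{k}\delta_{jk}\Bigl(e^{i((\lambda_j-\lambda_k)t+\Phi(t))}+e^{i((\lambda_j-\lambda_k)t-\Phi(t))}\Bigr)\phi_k.
\end{equation*}
Each summand oscillates at instantaneous frequency $(\lambda_j-\lambda_k)\pm f(\epsilon_1\epsilon_2 t)$. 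By hypotheses \ref{cond_1} and \ref{cond_2} and the monotonicity $\dot f>0$, this quantity is bounded away from zero uniformly in $\alpha\in\mathcal{D}$ except for $(j,k)=(p,q)$ (and its Hermitian conjugate), for which it crosses zero at a unique interior time $s^\ast(\alpha)$.

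Next I eliminate the non-resonant oscillations by successive near-identity changes of variables of Krylov--Bogoliubov type. One averaging step is not enough: the residual would be $O(\epsilon_1)$ per unit time, whereas we must integrate over a window of length $T/(\epsilon_1\epsilon_2)$, so a second averaging round is required to push the residual below the threshold tolerated by the adiabatic step. After a further rotation removing the slow phase $(\lambda_q-\lambda_p)t-\Phi(t)$, the dynamics splits, modulo these higher-order corrections, into an $(n-2)$-dimensional block acting trivially on $\mathrm{span}(\mathbf{e}_j)_{j\neq p,q}$ and a two-level block in $\mathrm{span}(\mathbf{e}_p,\mathbf{e}_q)$ driven by
\begin{equation*}
H_{\mathrm{eff}}(t)=\begin{pmatrix}0 & \epsilon_1 u(\epsilon_1\epsilon_2 t)\,\delta_{pq}\\ \epsilon_1 u(\epsilon_1\epsilon_2 t)\,\delta_{pq} & (\lambda_q(\alpha)-\lambda_p(\alpha))-f(\epsilon_1\epsilon_2 t)\end{pmatrix}.
\end{equation*}

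I then apply the adiabatic theorem to $H_{\mathrm{eff}}$. Its eigenvalues differ by $\sqrt{\Delta^2+4\epsilon_1^2 u^2\delta_{pq}^2}$ with $\Delta=(\lambda_q-\lambda_p)-f$, and since $0\notin\mathcal{I}_{pq}$, $u>0$ on $(0,T)$, and $s^\ast(\alpha)$ depends continuously on $\alpha$ and stays in a compact subset of $(0,T)$, this gap admits a uniform lower bound of the form $c\,\epsilon_1$ with $c>0$ independent of $\alpha$ and $\delta_{pq}\in\mathcal{I}_{pq}$. Because $u$ vanishes at the endpoints while $\Delta(0)>0>\Delta(T)$, the lower instantaneous eigenvector equals $\mathbf{e}_p$ at $t=0$ and $\mathbf{e}_q$ at $t=T/(\epsilon_1\epsilon_2)$, so adiabatic following produces the desired population inversion up to a geometric phase $e^{i\theta}$. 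The adiabatic parameter, being (sweep rate)/(gap)$^2\sim\epsilon_1\epsilon_2/\epsilon_1^2=\epsilon_2/\epsilon_1$, yields the leading error $O(\epsilon_2\epsilon_1^{-1})$.

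The main obstacle, already flagged in the introduction, is the interaction of the two scales: a naive first-order RWA followed by a standard adiabatic approximation does not converge, because the $O(\epsilon_1)$ RWA residue integrated over the adiabatic time $T/(\epsilon_1\epsilon_2)$ is not small. The delicate part is therefore to track how the second-order averaging corrections couple into the adiabatic estimate, in particular how fast-oscillating remainders project onto the slow spectral subspaces of $H_{\mathrm{eff}}$. The four contributions in the announced bound $\epsilon_2\epsilon_1^{-1}+\epsilon_1^{3/2}\epsilon_2^{-1/2}+\epsilon_1+\epsilon_1^{5/2}\epsilon_2^{-3/2}$ correspond respectively to the pure adiabatic error, the pure first-order RWA error, and two cross-terms produced by propagating the higher-order RWA corrections through the adiabatic step; requiring all four to be small forces the intermediate regime $\epsilon_1^{5/3}\ll\epsilon_2\ll\epsilon_1$ under which the cascade is rigorous.
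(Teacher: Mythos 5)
Your overall strategy is the same as the paper's (interaction frame, higher-order RWA, then adiabatic following of the decoupled two-level block), but the proposal skips over exactly the steps where the proof is hard, and as written two of them would fail. First, you assert that the non-resonant terms, including those generated at second order, can be removed by ``successive near-identity changes of variables of Krylov--Bogoliubov type.'' This is not possible with bounded generators: after the first elimination and the passage to the rotating frame adapted to $(\textbf{e}_p,\textbf{e}_q)$, the new $\mathcal{O}(\epsilon_1^2)$ terms carry phases such as $-2\tilde{\phi}(t)+\phi(t)$, whose instantaneous frequency $-2\lambda_{\epsilon_1}(s)+f(s)$ can vanish inside $[0,\bar{s}(\alpha)]$ (it is not covered by Hypothesis~2, which only controls first-order resonances). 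The paper handles these terms not by averaging them away but by integrating them with stationary-phase (Van der Corput) estimates, split at $\bar{s}(\alpha)$ using a second-derivative bound before the crossing and a first-derivative bound after (Lemmas~\ref{lemma:van_der_corput_1}--\ref{lemma:oscillatory_int_2}); the resulting generator $X_5$ is only $\mathcal{O}\big(1/\sqrt{\epsilon_1\epsilon_2}\big)$, and this is precisely where the fractional exponents $\epsilon_1^{3/2}\epsilon_2^{-1/2}$ and $\epsilon_1^{5/2}\epsilon_2^{-3/2}$ in the statement come from. Your attribution of the four error terms cannot be justified without this mechanism, and with a genuinely bounded second averaging step you would instead land on the (false, under Hypotheses~1--2 alone) estimate of Proposition~\ref{prop:with_second_order}.

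Second, your effective Hamiltonian $H_{\mathrm{eff}}$ drops the second-order diagonal (Bloch--Siegert-type) shifts $\epsilon_1^2 h_{jj}^0$. These cannot be truncated: over the horizon $T/(\epsilon_1\epsilon_2)$ their cumulative effect is of order $\epsilon_1/\epsilon_2$, which diverges in the regime $\epsilon_2\ll\epsilon_1$ that you yourself identify as necessary; the paper keeps them inside $\hat{H}_{\mathrm{rwa}}$ and feeds them into the adiabatic step. Finally, the adiabatic estimate $\mathcal{O}(\epsilon_2/\epsilon_1)$ is asserted via the heuristic ``sweep rate over gap squared''; since the gap of the two-level block is only of order $\epsilon_1$ near the single crossing time and of order $1$ elsewhere, a standard uniform-gap adiabatic theorem with gap $c\,\epsilon_1$ gives a worse bound, and the paper has to invoke a non-standard adiabatic lemma (Lemma~30 of \cite{ROBIN2022414}), together with the uniform bound on $\int_0^1|\dot{\theta}_{\epsilon_1}|$ (Lemma~\ref{lemma:int_theta}), to obtain $\epsilon_2\epsilon_1^{-1}$ uniformly in $\alpha$ and $\delta_{pq}$. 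Without these three ingredients the proposed argument does not yield the stated error bound.
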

\begin{remark}
    If we choose $\epsilon_{2}=\epsilon_{1}^{7/5}$, then there exists $C'>0$ such that for every $\alpha\in\mathcal{D}$ and $(\epsilon_1,\epsilon_2)\in(0,\eta)^2$,
\begin{equation*}\min_{\theta\in[0,2\pi]}\left\|\psi_{\epsilon_{1},\epsilon_{2}}\left(\frac{T}{\epsilon_{1}\epsilon_{2}}\right)-\exp(i\theta)\textbf{e}_{q}\right\|\leq C'\epsilon_{1}^{2/5},
    \end{equation*}
    that is, the final state is arbitrarily close to the eigenstate $\textbf{e}_q$, up to a phase, as $\epsilon_1$ goes to zero. 
\end{remark}
\begin{remark}
\label{remark:control_law_construction}
   Here we fix $T=1$ and give a simple construction of the control law satisfying the conditions of Theorem \ref{theorem:main} by choosing $u(s)=\sin(\pi s)$ and $f(s)=v_0+s(v_1-v_0)$ for $s\in[0,1]$.
Thus the control law is given by
\begin{equation*}
\label{eq:control_law}\omega_{\epsilon_{1},\epsilon_{2}}(t)=2\epsilon_{1}\sin(\epsilon_{1}\epsilon_{2}\pi t)\cos\Big(v_0t+\frac{\epsilon_1\epsilon_2(v_1-v_0)}{2}t^2\Big),
\end{equation*}
for all $t\in[0,1/(\epsilon_1\epsilon_2)]$.
\end{remark}
With a stronger assumption on eigenvalues of the system, we obtain a better estimation of the error. A preliminary version of this result on three-level systems has been proposed in \cite{LiangBoscainSigalotti-CDC2024}.
\begin{proposition}
\label{prop:with_second_order}
    Assume that the assumptions of Theorem~\ref{theorem:main} are satisfied, and moreover, for all $1\leq j<k\leq n$ and $\alpha\in\mathcal{D}$, $\lambda_{k}(\alpha)-\lambda_{j}(\alpha)\notin[2v_0,2v_1]$. 
    Then there exist $C>0$ and $\eta>0$ such that for every $(\epsilon_1,\epsilon_2)\in(0,\eta)^2$,\begin{equation*}
\min_{\theta\in[0,2\pi]}\left\|\psi_{\epsilon_{1},\epsilon_{2}}\left(\frac{T}{\epsilon_{1}\epsilon_{2}}\right)-\exp(i\theta)e_{k}\right\|\leq C \max\left(\frac{\epsilon_{1}^{2}}{\epsilon_{2}},\frac{\epsilon_{2}}{\epsilon_{1}}\right).
\end{equation*}
\end{proposition}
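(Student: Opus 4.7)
The proof will follow the same architecture as that of Theorem~\ref{theorem:main} (a rotating wave approximation composed with a non-standard adiabatic theorem) but will push the averaging step one order further. The improvement in the estimate comes from replacing the first-order normal form used in the proof of Theorem~\ref{theorem:main} by a second-order one; this is made possible precisely by the extra hypothesis, which forbids any resonance between the spectral gaps $\lambda_{k}(\alpha)-\lambda_{j}(\alpha)$ and the \emph{second harmonic} of the chirped pulse.

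After passing to the interaction picture with respect to $H(\alpha)$, the coefficient multiplying $\delta_{jk}$ takes the form $2\epsilon_{1}u(\epsilon_{1}\epsilon_{2}t)\cos\bigl(\int_{0}^{t}f(\epsilon_{1}\epsilon_{2}\tau)d\tau\bigr)\exp(i(\lambda_{j}(\alpha)-\lambda_{k}(\alpha))t)$. A first RWA retains only the entries resonant with the phase $\int f$; by conditions~\ref{cond_1}--\ref{cond_2} the surviving block is the $(p,q)$-entry, while the non-resonant remainder carries phases of the form $\pm f\pm(\lambda_{k}-\lambda_{j})$ and, after using $4\cos^{2}(\int f)=2+2\cos(2\int f)$, also $\pm 2f$ and $\pm 2f\pm(\lambda_{k}-\lambda_{j})$. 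Under the sole hypotheses of Theorem~\ref{theorem:main} only the first family is bounded away from zero uniformly in $\alpha\in\mathcal{D}$; the new assumption $\lambda_{k}(\alpha)-\lambda_{j}(\alpha)\notin[2v_{0},2v_{1}]$ is exactly what is needed to bound the second family away from zero uniformly as well, since $2f(s)\in(2v_{0},2v_{1})$ for all $s\in(0,T)$.

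With these small-divisor estimates in hand, I would implement a second near-identity unitary change of variable $\exp(i\epsilon_{1}K(t))$, where $K$ is constructed by integration by parts so as to cancel the $O(\epsilon_{1})$ oscillating residue left after the first RWA. After this step the transformed Hamiltonian coincides, up to an $O(\epsilon_{1}^{2})$ correction, with the chirped effective two-level model on the plane spanned by $\textbf{e}_{p}$ and $\textbf{e}_{q}$ that is treated by the adiabatic theorem in Section~\ref{sec:proof}. Applying that same adiabatic theorem yields an adiabatic error of order $\epsilon_{2}/\epsilon_{1}$, while the improved RWA residue propagated over the physical time $T/(\epsilon_{1}\epsilon_{2})$ contributes $O(\epsilon_{1}^{2}/\epsilon_{2})$; combining the two gives the claimed bound.

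The main technical obstacle is the bookkeeping of the $\epsilon_{1}$- and $\epsilon_{2}$-scales through the second conjugation: the time-derivative of $K$ mixes the slow variable $\epsilon_{1}\epsilon_{2}t$ with the fast oscillating phases, and one has to check that the resulting cross-terms do not spoil the $O(\epsilon_{1}^{2}/\epsilon_{2})$ estimate. This reduces to showing that the phase functions $\pm 2f\pm(\lambda_{k}-\lambda_{j})$ and their first derivatives stay uniformly bounded away from zero on $\mathcal{D}\times[0,T]$, which is exactly what the closed-interval form of the new assumption provides.
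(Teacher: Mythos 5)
Your strategy is the paper's own: use the extra non-resonance hypothesis to push the normal form one order further, reduce to an effective chirped two-level Hamiltonian with an RWA error of order $\epsilon_1^2/\epsilon_2$, and conclude with the same adiabatic lemma contributing $\epsilon_2/\epsilon_1$. You also locate the role of the new hypothesis correctly: among the second-order phases $(\lambda_j-\lambda_k)+\sigma f$ with $\sigma\in\{0,\pm2\}$, the only ones that can vanish under the assumptions of Theorem~\ref{theorem:main} alone are those with $\sigma=2$, and $\lambda_k(\alpha)-\lambda_j(\alpha)\notin[2v_0,2v_1]$ is exactly what makes them uniformly non-resonant (the phases $\pm2f$ alone are already harmless since $f\ge v_0>0$).

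The gap is in the quantitative step that is supposed to produce the improved rate. The construction you describe stops at one conjugation $\exp(i\epsilon_1K)$ removing the $O(\epsilon_1)$ non-resonant terms (this is the paper's first change of variables $\exp(i\epsilon_1X_1)$ and needs none of the new hypothesis), after which you truncate the $O(\epsilon_1^2)$ correction and claim this costs $O(\epsilon_1^2/\epsilon_2)$ over the horizon $T/(\epsilon_1\epsilon_2)$. A Duhamel/Gr\"onwall estimate for an $O(\epsilon_1^2)$ Hamiltonian perturbation over a time of length $T/(\epsilon_1\epsilon_2)$ only gives $O(\epsilon_1/\epsilon_2)$, which is not small in the relevant regime $\epsilon_2\lesssim\epsilon_1$; so as written the bound does not follow. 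To get $\epsilon_1^2/\epsilon_2$ you must perform a genuine second elimination, i.e.\ a further change of variables $\exp(i\epsilon_1^2\tilde X_2)$ whose generator has the second-order phases $(\lambda_j-\lambda_k)+\sigma f$, $\sigma\in\{0,\pm2\}$, in its denominators --- and this is precisely where the new hypothesis is used, since it makes that generator uniformly bounded (this is the paper's set $\mathcal{J}''$). Two further points must then be handled: the non-oscillating diagonal second-order terms $h^0_{jj}$ (a Bloch--Siegert-type shift) cannot be eliminated and must be retained in the effective two-level Hamiltonian passed to the adiabatic lemma, and since $u(0)=u(T)=0$ the generator vanishes at the endpoints so no boundary contribution is lost. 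Only after this second elimination is the residual $O(\epsilon_1^3+\epsilon_1^2\epsilon_2)$, whose truncation costs $O(\epsilon_1^2/\epsilon_2+\epsilon_1)$ and, combined with the adiabatic error $O(\epsilon_2/\epsilon_1)$, yields the stated estimate. Finally, no lower bound on the \emph{derivatives} of the phases is needed here; such Van der Corput-type conditions are what the paper resorts to only in Theorem~\ref{theorem:main}, where some second-order phases may vanish.
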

{
\section{Sketch of the proof}\label{sec:sketch-proof}
The proof of Theorem~\ref{theorem:main} consists of the following steps:
\begin{itemize}
    \item Step 1: In equation \eqref{eq:interaction_frame}, the dynamics of \eqref{system:n_level} are recasted in the interaction frame. The dynamics in the interaction frame are characterized by oscillating terms of order $\mathcal{O}(\epsilon_1)$.
    
    \item Step 2: Hypothesis \ref{cond_2} of Theorem~\ref{theorem:main} allows us to introduce the change of variables in equation~\eqref{eq:first_cv} to eliminate all oscillating terms of order \( \mathcal{O}(\epsilon_1) \) except for the term with time-dependent frequency \( \lambda_{p}(\alpha) - \lambda_{q}(\alpha) + f(\epsilon_1 \epsilon_2 t) \) that couples the pair \( (\textbf{e}_{p}, \textbf{e}_{q}) \). Due to the non-linearity of the dynamics, this first-order elimination generates new oscillating terms of order \( \mathcal{O}(\epsilon_1^2) \).

\item Step 3: We show that, thanks to Hypothesis~\ref{cond_2} of Theorem~\ref{theorem:main}, the oscillating term with frequency \( \lambda_{p} - \lambda_{q} + f(\epsilon_1 \epsilon_2 t) \) introduces an implicit rotation between \( (\textbf{e}_{p}, \textbf{e}_{q}) \). This rotation is characterized by the radius \( \lambda_{\epsilon_1} \) and the angle \( \theta_{\epsilon_1} \), as defined in equations~\eqref{eq:def_lambda} and \eqref{eq:def_theta}. In equation~\eqref{eq:fourth_cv}, a change of variables is introduced to recast the dynamics in a rotational frame depending on \( \lambda_{\epsilon_1} \) and \( \theta_{\epsilon_1} \).
    
    \item Step 4: Using higher-order averaging results, we introduce the change of variables in equation~\eqref{eq:fifth_cv} to eliminate all oscillating terms of order $\mathcal{O}(\epsilon_1^2)$.

\item Step 5: In Lemma~\ref{lemma:RWA_1}, we obtain an approximate dynamics by truncating the residual term of the Hamiltonian. This truncation and the earlier steps correspond to the rotating wave approximation (RWA).

\item Step 6: We project the approximate dynamics onto the two-dimensional subspace \( \textbf{span}(\textbf{e}_{p}, \textbf{e}_{q}) \) and apply a non-standard adiabatic approximation to achieve population inversion between \( \textbf{e}_{p} \) and \( \textbf{e}_{q} \) in the approximate dynamics.

\item Step 7: Finally, we conclude the proof by combining the errors from the rotating wave approximation (Steps 1-5) and the adiabatic approximation (Step 6).
\end{itemize}
}
\section{Proof of Theorem~\ref{theorem:main}}\label{sec:proof}
\subsection*{Step 1: Interaction frame}
Define $\Delta(\alpha)=\lambda_{q}(\alpha)-\lambda_{p}(\alpha)$. In the following, we will replace $H(\alpha)$ by $H(\alpha)-\frac{1}{2}(\lambda_{p}(\alpha)+\lambda_{q}(\alpha))\mathbb{I}_{n}$, which will only introduce a relative phase to the state. Moreover, this transition will not change the gaps between eigenvalues, the system will always satisfy the conditions in Theorem \ref{theorem:main}, and $-\lambda_{p}(\alpha)=\lambda_{q}(\alpha)=\Delta(\alpha)/2.$
For $E\in \mathbb{R}$ and $1\leq j\leq k\leq n$, let us define 
\begin{equation}
\label{eq:def_A_j_k}
\begin{aligned}
    A_{jk}(E)&=\left\{\begin{matrix}
e^{iE}\textbf{e}_{jk}+e^{-iE}\textbf{e}_{kj} &\text{if }j<k,\\ 
\cos(E)\textbf{e}_{jj} & \text{if }j=k,
\end{matrix}\right.\\ B_{jk}(E)&=\left\{\begin{matrix}
\-ie^{iE}\textbf{e}_{jk}-ie^{-iE}\textbf{e}_{kj}&\text{if }j<k,\\ 
-\sin(E)\textbf{e}_{jj} & \text{if }j=k.
\end{matrix}\right.
\end{aligned}
\end{equation}
Let us fix $T=1$. For $\epsilon_1,\epsilon_2>0$, define
\begin{equation}
\label{eq:define_phi}
\phi:\left[0,\frac{1}{\epsilon_1\epsilon_2}\right]\ni t\mapsto \int_{0}^{t}f(\epsilon_1\epsilon_2\tau)\text{d}\tau.
\end{equation}
Let us  recast \eqref{system:n_level} in the interaction frame $\psi(t)=\exp(-itH(\alpha))\psi_{I}(t)$. Notice that 
\begin{equation}
\label{eq:interaction_frame}
    i\frac{\text{d}}{\text{d}t}\psi_{I}(t)=H_{I}(t)\psi(t),
\end{equation}
where
\begin{equation*}
\begin{aligned}
    H_{I}(t)=&-H(\alpha)+\exp(itH(\alpha))H(t)\exp(-itH(\alpha))\\
    =&\sum_{j=1}^{n}{\sum_{k=j}^{n}}\delta_{jk}\omega_{\epsilon_1,\epsilon_2}(t)A_{jk}\big((\lambda_{j}-\lambda_{k})t\big).
\end{aligned}
\end{equation*}
For {$(j,k)\in\{1,\dots,n\}^2$} and $\sigma\in\mathbb{Q}$, let us define
\begin{equation}
\begin{aligned}
\label{eq:def_f_phi}
    f_{jk}^{\sigma}&:[0,1]\ni s\mapsto \lambda_{j}-\lambda_{k}+\sigma f(s),\\
    \phi_{jk}^{\sigma}&:\left[0,\frac{1}{\epsilon_1\epsilon_2}\right]\ni t\mapsto (\lambda_{j}-\lambda_{k})t+\sigma\phi(t).
\end{aligned}
\end{equation}
Notice that by definition of $A_{jk}$ in equation \eqref{eq:def_A_j_k}, for all $E_1, E_2\in\mathbb{R}$, $2\cos(E_1)A_{jk}(E_2)=A_{jk}(E_2-E_1)+A_{jk}(E_2+E_1)$. Then, when applying the chirped pulse $\omega_{\epsilon_{1},\epsilon_{2}}(\cdot)$ given in \eqref{eq:control}, we obtain
\begin{equation}
\begin{aligned}
\label{eq:H_I}
    H_{I}(t)=\sum_{j=1}^{n}{\sum_{k=j}^{n}}\big(\epsilon_{1}\delta_{jk}u(\epsilon_{1}\epsilon_{2}t)A_{jk}\big(\phi_{jk}^{1}(t)\big)+\epsilon_{1}\delta_{jk}u(\epsilon_{1}\epsilon_{2}t)A_{jk}\big(\phi_{jk}^{-1}(t)\big)\big).
\end{aligned}
\end{equation}
\subsection*{Step 2: First-order elimination}
 In the following proposition, we give the expansion of the Hamiltonian after a general unitary change of variables.
\begin{proposition}[Change of variables]
\label{prop:change_of_var}
Consider the change of variables 
$\psi(t)=\exp(iX(t))\Tilde{\psi}(t)$,
 where $X(\cdot)$ is a smooth curve in the space of $n\times n$ Hermitian matrices and 
$\psi(\cdot)$ is the solution of the Schr\"odinger equation
\begin{equation*}
    i\frac{\text{d}}{\text{d}t}\psi(t)=H(t)\psi(t).
\end{equation*}
Then $\Tilde{\psi}(\cdot)$ is the solution of 
\begin{equation}
\label{eq:unitary_change_of_variables}
    i\frac{\text{d}}{\text{d}t}\Tilde{\psi}(t)=\Tilde{H}(t)\Tilde{\psi}(t),\quad\Tilde{\psi}(0)=\exp(-iX(0))\psi(0),
\end{equation}
where $\Tilde{H}(\cdot)$ is given by
\begin{equation*}
    \begin{aligned}
        \Tilde{H}(t)=\sum_{k=0}^{\infty}\frac{(-1)^{k}}{k!}{\rm ad}^{k}_{iX(t)}\left(H(t)+\frac{1}{k+1}\frac{\text{d}}{\text{d}t}X(t)\right).
    \end{aligned}
\end{equation*}
\end{proposition}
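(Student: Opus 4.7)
The plan is to differentiate the defining relation $\psi(t)=\exp(iX(t))\tilde{\psi}(t)$, substitute the Schr\"odinger equation for $\psi$, and expand the resulting conjugation and derivative-of-exponential as power series in ${\rm ad}_{iX}$. The initial condition $\tilde\psi(0)=\exp(-iX(0))\psi(0)$ is immediate by inverting the change of variables at $t=0$.

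First I would derive the compact identity
\begin{equation*}
\tilde{H}(t)=\exp(-iX(t))H(t)\exp(iX(t))-i\exp(-iX(t))\frac{\text{d}}{\text{d}t}\exp(iX(t)),
\end{equation*}
which follows by writing $i\dot{\psi}=i\bigl(\frac{\text{d}}{\text{d}t}e^{iX}\bigr)\tilde{\psi}+e^{iX}(i\dot{\tilde{\psi}})=He^{iX}\tilde{\psi}$, multiplying on the left by $e^{-iX}$, and solving for $i\dot{\tilde{\psi}}$. The two terms are then handled by standard Lie-theoretic identities.

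For the conjugation term, differentiating $f(s)=e^{-siX}He^{siX}$ yields the linear ODE $f'(s)=-{\rm ad}_{iX}(f(s))$ with $f(0)=H$, whose solution at $s=1$ gives $e^{-iX}He^{iX}=\sum_{k\ge 0}\frac{(-1)^k}{k!}{\rm ad}^{k}_{iX}(H)$. For the derivative-of-exponential I would invoke the classical formula
\begin{equation*}
\frac{\text{d}}{\text{d}t}\exp(iX(t))=\exp(iX(t))\sum_{k=0}^{\infty}\frac{(-1)^{k}}{(k+1)!}{\rm ad}^{k}_{iX(t)}(i\dot{X}(t)),
\end{equation*}
equivalent to applying the entire function $(1-e^{-z})/z$ to ${\rm ad}_{iX}$ acting on $i\dot{X}$, and itself provable by differentiating the Taylor series of $e^{iX}$ term by term and regrouping. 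Multiplying by $-i\exp(-iX)$ on the left and using $-i\cdot(i\dot{X})=\dot{X}$ (pulled inside ${\rm ad}^{k}_{iX}$ by linearity) gives $-i\,e^{-iX}\frac{\text{d}}{\text{d}t}e^{iX}=\sum_{k\ge 0}\frac{(-1)^{k}}{(k+1)!}{\rm ad}^{k}_{iX}(\dot{X})$. Adding the two expansions, the $k$-th terms combine into $\frac{(-1)^k}{k!}{\rm ad}^{k}_{iX}\bigl(H+\frac{1}{k+1}\dot{X}\bigr)$, which is precisely the claimed series.

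Convergence of both series, and the validity of exchanging differentiation with summation, is automatic in finite dimensions because ${\rm ad}_{iX}$ is bounded on $M_n(\mathbb{C})$ with $\|{\rm ad}^{k}_{iX}(B)\|\le(2\|X\|)^{k}\|B\|$, yielding a uniformly convergent majorant on compact time intervals. The only real obstacle is bookkeeping with the signs and factors of $i$ when aligning the two expansions; no genuine analytic difficulty arises.
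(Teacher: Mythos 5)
Your proposal is correct and follows essentially the same route as the paper: differentiate the change of variables to get $\tilde{H}=e^{-iX}He^{iX}-ie^{-iX}\frac{\text{d}}{\text{d}t}e^{iX}$, expand the conjugation and the derivative of the exponential as series in ${\rm ad}_{iX}$, and combine the $k$-th terms. The only difference is cosmetic: the paper cites the standard formula for the differential of the exponential map (Hall, Theorem~4.5), whereas you sketch its proof and note convergence, which is fine but not needed.
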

\begin{proof}
{
By differentiating $\psi(t)=\exp(iX(t))\Tilde{\psi}(t)$, we can deduce that the dynamics of $\tilde{\psi}$ is characterized by the Hamiltonian
\begin{equation*}
    \begin{aligned}
         \Tilde{H}(t)=\exp(-iX(t))H(t)\exp(iX(t))+\exp(-iX(t))\text{d}\exp_{iX(t)}\left(\frac{\text{d}}{\text{d}t}X(t)\right).
    \end{aligned}
\end{equation*}
By the Baker--Campbell--Hausdorff formula and by Theorem~4.5 in \cite{hall2000elementary}, we have
\begin{equation*}
    \begin{aligned}
        \exp(-iX(t))H(t)\exp(iX(t))=\sum_{k=1}^{\infty}\frac{(-1)^{k}}{k!}\text{ad}^{k}_{iX(t)}\left(H(t)\right),
    \end{aligned}
\end{equation*}
\begin{equation*}
\begin{aligned}
   \exp(-iX(t))\text{d}\exp_{iX(t)}\left(
    \frac{\text{d}}{\text{d}t}X(t)\right)=\sum_{k=0}^{\infty}\frac{(-1)^{k}}{(k+1)!}\text{ad}^{k}_{iX(t)}\left(\frac{\text{d}}{\text{d}t}X(t)\right).
\end{aligned}
\end{equation*}
Hence, we conclude that
\begin{equation*}
\begin{aligned}
    \Tilde{H}(t)=\sum_{k=0}^{\infty}\frac{(-1)^{k}}{k!}\text{ad}^{k}_{iX(t)}\left(H(t)+\frac{1}{k+1}\frac{\text{d}}{\text{d}t}X(t)\right).
\end{aligned}
\end{equation*}
}
\end{proof}
\begin{definition}\label{def:big_O}{  Fix $T>0$}, we call $R$ a \emph{$(\epsilon_{1},\epsilon_{2})$-parameterized function} if for every $\epsilon_{1},\epsilon_{2}>0$, $R_{\epsilon_{1},\epsilon_{2}}$ is a real-valued function defined on $\left[0,\frac{T
}{\epsilon_{1}\epsilon_{2}}\right]$. 
Given an $(\epsilon_{1},\epsilon_{2})$-parameterized function $R$ and  $g:\mathbb{R}_{+}^{2}\rightarrow\mathbb{R}_{+}$, we say that { $R_{\epsilon_1,\epsilon_2}(t)=\mathcal{O}\big(g(\epsilon_{1},\epsilon_{2})\big)$ over $\left[0,\frac{T}{\epsilon_1\epsilon_2}\right]$} if there exist $\delta,C>0$ such that for every $(\epsilon_{1},\epsilon_{2})\in(0,\delta)^{2}$ and $t\in\left[0,\frac{T}{\epsilon_{1}\epsilon_{2}}\right]$, we have $\left|R_{\epsilon_{1},\epsilon_{2}}(t)\right|\leq C g(\epsilon_{1},\epsilon_{2})$.
\end{definition}
Let us first define the  sets of indices
\begin{equation}
\begin{aligned}
    \mathcal{I}=\left\{(j,k,\sigma)\mid 1\leq j{\leq}k\leq n, \sigma=\pm 1\right\},\quad\mathcal{I}'=\mathcal{I}\setminus\left\{(p,q,1)\right\}.
\end{aligned}
\end{equation}
By Hypothesis~\ref{cond_2} of Theorem~\ref{theorem:main} and the assumptions made on $f$, for every $(j,k,\sigma)\in\mathcal{I}'$ and $s\in[0,1]$, $f_{jk}^{\sigma}(s)=\lambda_{j}-\lambda_{k}+\sigma f(s)\neq 0$, where $f_{jk}^{\sigma}(\cdot)$ is introduced in equation \eqref{eq:def_f_phi}. Then we can apply a first change of variables to  system \eqref{eq:interaction_frame} $\psi_{I}(t)=\exp(i\epsilon_{1}X_{1}(t))\hat{\psi}_{1}(t)$, where 
\begin{equation}
\label{eq:first_cv}
    X_{1}(t)=\sum_{(j,k,\sigma)\in\mathcal{I}'}\frac{\delta_{jk}u(\epsilon_{1}\epsilon_{2}t)}{f_{jk}(\epsilon_{1}\epsilon_{2}t)}B_{jk}\big(\phi_{jk}^{\sigma}(t)\big).
\end{equation}
Notice that, for every $(j,k,\sigma)\in\mathcal{I}'$, $\frac{\text{d}}{\text{d}t}B_{jk}\big(\phi_{jk}^{\sigma}(t)\big)=-f_{jk}^{\sigma}(\epsilon_{1}\epsilon_{2}t)A_{jk}\big(\phi_{jk}^{\sigma}(t)\big)$ and $\frac{\text{d}}{\text{d}t}\left(\frac{u(\epsilon_{1}\epsilon_{2}t)}{f_{jk}^{\sigma}(\epsilon_{1}\epsilon_{2}t)}\right)=\mathcal{O}(\epsilon_{1}\epsilon_{2})$
over $[0,1/(\epsilon_1\epsilon_2)]$, where $\mathcal{O}(\cdot)$ is defined in Definition \ref{def:big_O}. Then, by differentiating $X_{1}(t)$, we obtain that 
\begin{equation}
    \begin{aligned}
    \label{eq:d_dt_X_1}
     \frac{\text{d}}{\text{d}t}X_{1}(t)=-\sum_{(j,k,\sigma)\in\mathcal{I}'}\delta_{jk}u(\epsilon_{1}\epsilon_{2}t)A_{jk}\big(\phi_{jk}^{\sigma}(t)\big)+\mathcal{O}(\epsilon_{1}\epsilon_{2}).
    \end{aligned}
\end{equation}
By Proposition~\ref{prop:change_of_var}, we  deduce that the dynamics of $\hat{\psi}_{1}$ are characterized by the Hamiltonian
\begin{equation*}
\begin{aligned}
    \hat{H}_{1}(t)=H_{I}(t)+\epsilon_{1}\frac{\text{d}}{\text{d}t}X_1(t)-i\epsilon_{1}\left[X_1(t),H_{I}(t)+\epsilon_1\frac{1}{2}\frac{\text{d}}{\text{d}t}X_1(t)\right]+\mathcal{O}(\epsilon_{1}^{3}).
\end{aligned}
\end{equation*}
By equations \eqref{eq:H_I} and \eqref{eq:d_dt_X_1}, we obtain that
\begin{equation}
\begin{aligned}
\label{eq:commutator}
    \hat{H}_{1}(t)=\epsilon_{1}\delta_{pq}u(\epsilon_{1}\epsilon_{2}t)A_{pq}\big(\phi_{pq}^{1}(t)\big)-i\epsilon_{1}\left[X_1(t),H_{I}(t)+\epsilon_1\frac{1}{2}\frac{\text{d}}{\text{d}t}X_1(t)\right]+\mathcal{O}(\epsilon_{1}^{3}+\epsilon_{1}^{2}\epsilon_{2})
\end{aligned}
\end{equation}
over $\left[0,1/(\epsilon_1\epsilon_2)\right]$. For $s\in[0,1]$, let us define
\begin{equation*}
\begin{aligned}
c_{jk}^{\sigma}(s)&=\left\{
   \begin{matrix}\frac{\delta_{jk}u(s)}{f_{jk}^{\sigma}(s)} &\text{if }(j,k,\sigma)\in\mathcal{I}', \\ 0 &\text{if }(j,k,\sigma)=(p,q,1),\end{matrix}\right.\\
   l_{jk}^{\sigma}(s)&=\left\{\begin{matrix}
       \frac{1}{2}\delta_{jk}u(s) & \text{if }(j,k,\sigma)\in\mathcal{I}', \\
       \delta_{jk}u(s) & \text{if }(j,k,\sigma)=(p,q,1).
   \end{matrix}\right.
\end{aligned}
\end{equation*}
{Notice that when $j=k$, for every $s\in[0,1]$, we have $c_{jj}^{1}(s)=-c_{jj}^{-1}(s)=\delta_{jj}u(s)/f(s)$ and $l_{jj}^{1}(s)=l_{jj}^{-1}(s)=\delta_{jj}u(s)$.} For $j>k$ and $\sigma\in\{-1,1\}$ let us note, for every $s\in[0,1]$,
{\begin{equation*}
    \begin{aligned}
        c_{jk}^{\sigma}(s)&=-c_{kj}^{-\sigma}(s),\quad l_{jk}^{\sigma}(s)&=l_{kj}^{-\sigma}(s).
    \end{aligned}
\end{equation*}
Then, we obtain the following notations
\begin{equation*}
    \begin{aligned}
        -iX_{1}(t)&=\sum_{j=1}^{n}\sum_{k=1}^{n}\sum_{\sigma\in\{-1,1\}}c_{jk}^{\sigma}(\epsilon_1\epsilon_2t)e^{i\phi_{jk}^{\sigma}(t)}\textbf{e}_{jk}
    \end{aligned}
\end{equation*}
and
\begin{equation*}
\begin{aligned}
        H_{I}(t)+\epsilon_{1}\frac{1}{2}\frac{\text{d}}{\text{d}t}X_{1}(t)=\sum_{j=1}^{n}\sum_{k=1}^{n}\sum_{\sigma\in\{-1,1\}}l_{jk}^{\sigma}(\epsilon_1\epsilon_2t)e^{i\phi_{jk}^{\sigma}(t)}\textbf{e}_{jk}+\mathcal{O}(\epsilon_{1}^{2}\epsilon_{2}).
\end{aligned}
\end{equation*}
}
{ 
We can deduce from equation \eqref{eq:commutator} that 
\begin{equation}
\begin{aligned}
\label{eq:H_1}
    \Hat{H}_{1}(t)=\epsilon_{1}\delta_{pq}u(\epsilon_{1}\epsilon_{2}t)A_{pq}\big(\phi_{pq}^{1}(t)\big)+\sum_{(j,k,\sigma)\in\mathcal{J}}\epsilon_{1}^{2}h_{jk}^{\sigma}(\epsilon_{1}\epsilon_{2}t)A_{jk}\big(\phi_{jk}^{\sigma}(t)\big)+\mathcal{O}(\epsilon_{1}^{3}+\epsilon_{1}^{2}\epsilon_{1})
\end{aligned}
\end{equation}
over $[0,1/(\epsilon_1\epsilon_2)]$, where
}
\begin{equation}
\label{eq:def_J}
    \mathcal{J}=\left\{(j,k,\sigma)\mid 1\leq j\leq k\leq n,\sigma\in\{-2,0,2\}\right\}
\end{equation}
and, for every $1\leq j\leq k\leq n$ and $s\in[0,1]$,
\begin{equation*}
\begin{aligned}
h_{jk}^{2}(s)=&\sum_{m=1}^{n}c_{jm}^{1}(s)l_{mk}^{1}(s)-l_{jm}^{1}(s)c_{mk}^{1}(s),\\
h_{jk}^{0}(s)=&\sum_{m=1}^{n}\Big(c_{jm}^{-1}(s)l_{mk}^{1}(s)-l_{jm}^{1}(s)c_{mk}^{-1}(s)\Big)+\sum_{m=1}^{n}\Big(c_{jm}^{1}(s)l_{mk}^{-1}(s)-l_{jm}^{-1}(s)c_{mk}^{1}(s)\Big),\\
h_{jk}^{-2}(s)=&\sum_{m=1}^{n}c_{jm}^{-1}(s)l_{mk}^{-1}(s)-l_{jm}^{-1}(s)c_{mk}^{-1}(s).
\end{aligned}
\end{equation*}
\subsection*{Step 3: Implicit rotation $\theta$ between \( (\textbf{e}_{p}, \textbf{e}_{q}) \)} Set 
\begin{equation}
\begin{aligned}
    \mathcal{J}'=\{(j,k,\sigma)\mid 1\leq j<k\leq n,\sigma\in\{-2,0\}\}\cup\{(j,j,\sigma)\mid 1\leq j\leq n,\sigma=\pm 2\}.
\end{aligned}
\end{equation}
Since by the assumption of Theorem \ref{theorem:main}, we have $\lambda_{1}<\dots<\lambda_{n}$ and $f(s)>0$ for all $s\in[0,1]$, we can deduce that, for every $(j,k,\sigma)\in\mathcal{J}'$ and $s\in[0,1]$, $f_{jk}^{\sigma}(s)=\lambda_{j}-\lambda_{k}+\sigma f(s)\neq 0$. Let us introduce a second change of variables $\hat{\psi}_{1}(t)=\exp(i\epsilon_{1}^{2}X_{2}(t))\hat{\psi}_{2}(t)$, where
\begin{equation}
\label{eq:second_cv}
    X_{2}(t)=\sum_{(j,k,\sigma)\in\mathcal{J}'}\frac{h_{jk}^{\sigma}(\epsilon_{1}\epsilon_{2}t)}{f_{jk}^{\sigma}(\epsilon_{1}\epsilon_{2}t)}B_{jk}\big(\phi_{jk}^{\sigma}(t)\big).
\end{equation}
Notice that 
\[\frac{\text{d}}{\text{d}t}X_{2}(t)=-\sum_{(j,k,\sigma)\in\mathcal{J}'}h_{jk}^{\sigma}(\epsilon_{1}\epsilon_{2}t)A_{jk}\big(\phi_{jk}^{\sigma}(t)\big)+\mathcal{O}(\epsilon_{1}\epsilon_{2})\] over $[0,1/(\epsilon_1
\epsilon_2)]$. Then, by Proposition \ref{prop:change_of_var} and Equation \eqref{eq:H_1}, we deduce that, over $[0,1/(\epsilon_1\epsilon_2)]$, the dynamics of $\hat{\psi}_{2}(t)$ are characterized by the Hamiltonian
\begin{equation*}
\begin{aligned}
    \hat{H}_{2}(t)=&\hat{H}_{1}(t)+\epsilon_{1}^{2}\frac{\text{d}}{\text{d}t}X_{2}(t)+\mathcal{O}(\epsilon_{1}^{3})\\
=&\epsilon_1\delta_{pq}u(\epsilon_{1}\epsilon_{2}t)A_{pq}\big(\phi_{pq}^{1}(t)\big)+\sum_{j=1}^{n} \epsilon_1^2 h_{jj}^{0}(\epsilon_1\epsilon_2 t)\textbf{e}_{jj}
    \\&+\sum_{1\leq j<k\leq n}\epsilon_1^2h_{jk}^{2}(\epsilon_1\epsilon_2 t)A_{jk}(\phi_{jk}^{2}(t))+\mathcal{O}(\epsilon_1^{3}+\epsilon_1^{2}\epsilon_2).
\end{aligned}   
\end{equation*}
\begin{remark}\label{remark:final_state}
    Since $u(0)=u(1)=0$, we  deduce that $h_{jk}^{\sigma}(0)=h_{jk}^{\sigma}(1)=0$ for every $(j,k,\sigma)\in\mathcal{J}$.
    Then $X_{1}(0)=X_{1}\left(\frac{1}{\epsilon_1\epsilon_2}\right)=0$ and $X_{2}(0)=X_{2}\left(\frac{1}{\epsilon_1\epsilon_2}\right)=0$. Hence, $\psi_{I}(0)=\hat{\psi}_2(0)$ and $\psi_{I}\left(\frac{1}{\epsilon_1\epsilon_2}\right)=\hat{\psi}_2\left(\frac{1}{\epsilon_2\epsilon_2}\right)$.
\end{remark}
 Notice that $-\lambda_{p}=\lambda_{q}=\Delta/2$. Let us introduce a third change of variables, namely, $\hat{\psi}_{2}(t)=U_{3}^{\dagger}(t)\hat{\psi}_{3}(t)$, where
\begin{equation}
\label{eq:third_cv}
\begin{aligned}
    U_{3}(t)=\sum_{j\notin\{p,q\}}\textbf{e}_{jj}+\exp\left(\frac{i}{2}\int_{0}^{t}\Delta-f(\epsilon_1\epsilon_2\tau)\text{d}\tau\right)\textbf{e}_{pp}+\exp\left(-\frac{i}{2}\int_{0}^{t}\Delta-f(\epsilon_1\epsilon_2\tau)\text{d}\tau\right)\textbf{e}_{qq}.
\end{aligned}
\end{equation}
Let us define
\begin{equation}
    \mathcal{K}=\left\{(j,k)\mid 1\leq j<k\leq n \text{ and }j,k\notin\{p,q\}\right\}.
\end{equation}
Then the dynamics of $\hat{\psi}_{3}$ are characterized by the Hamiltonian
\begin{equation*}
\begin{aligned}
    \hat{H}_{3}(t)=&\frac{1}{2}(\Delta-f(\epsilon_1\epsilon_2 t))(\textbf{e}_{qq}-\textbf{e}_{pp})+\epsilon_1\delta_{pq}u(\epsilon_1\epsilon_2 t)(\textbf{e}_{pq}+\textbf{e}_{qp})+\sum_{j=1}^{n} \epsilon_1^2 h_{jj}^{0}(\epsilon_1\epsilon_2 t)\textbf{e}_{jj}\\&+\epsilon_1^2h_{pq}^{2}(\epsilon_1\epsilon_2 t)A_{pq}(\phi(t))+\epsilon_1^2R(t)+\epsilon_1^2\tilde{R}(t)+\mathcal{O}(\epsilon_1^3+\epsilon_1^2\epsilon_2),
\end{aligned}
\end{equation*}
where
\begin{equation}
\begin{aligned}
R(t)=\sum_{(j,k)\in\mathcal{K}}h_{jk}^{2}(\epsilon_1\epsilon_2 t)A_{jk}(\phi_{jk}^{2}(t))
\end{aligned}
\end{equation}
and
\begin{equation}
\begin{aligned}
\label{eq:def_R}
        \tilde{R}(t)=&\sum_{j=1}^{p-1}\Big(h^{2}_{jp}(\epsilon_1\epsilon_2 t)A_{jp}\left(\lambda_{j}t+\frac{5\phi(t)}{2}\right)+h^{2}_{jq}(\epsilon_1\epsilon_2 t)A_{jq}\left(\lambda_{j}t+\frac{3\phi(t)}{2}\right)\Big)\\
    &+\sum_{j=p+1}^{q-1}\Big(h^2_{pj}(\epsilon_1\epsilon_2 t)A_{pj}\left(-\lambda_{j}t+\frac{3\phi(t)}{2}\right)+h_{jq}^2(\epsilon_1\epsilon_2 t)A_{jq}\left(\lambda_{j}t+\frac{3\phi(t)}{2}\right)\Big)\\
    &+\sum_{j=q+1}^{n}\Big(h_{pj}^{2}(\epsilon_1\epsilon_2 t)A_{pj}\left(-\lambda_j t+\frac{3\phi(t)}{2}\right)+h_{qj}^{2}(\epsilon_1\epsilon_2 t)A_{qj}\left(-\lambda_{j}t+\frac{5\phi(t)}{2}\right)\Big).
\end{aligned}
\end{equation}
For $\epsilon_1,\epsilon_2>0$ and $s\in[0,1]$, let us define
\begin{align}
    \lambda_{\epsilon_1}(s)&=\sqrt{\frac{1}{4}(\Delta-f(s))^2+\epsilon_1^2\delta_{pq}^2 u(s)^2},\label{eq:def_lambda}\\
    \theta_{\epsilon_1}(s)&=\textbf{sgn}(\delta_{pq})\arccos\left(\frac{\Delta-f(s)}{2\lambda_{\epsilon_1}(s)}\right)\label{eq:def_theta},
\end{align}
where $\textbf{sgn}(\delta_{pq})$ is the sign of $\delta_{pq}$. 
\begin{remark}
 By definition of   $\lambda_{\epsilon_1}$ and $\theta_{\epsilon_1}$, it follows that, for $s\in[0,1]$, 
    \begin{equation*}
    \begin{aligned}
        \begin{pmatrix}
        -\frac{1}{2}(\Delta-f(s)) & \epsilon_1\delta_{pq}u(s)\\
        \epsilon_1\delta_{pq}u(s) & \frac{1}{2}(\Delta-f(s))
        \end{pmatrix}=\lambda_{\epsilon_1}(s)\begin{pmatrix}
            -\cos\big(\theta_{\epsilon_1}(s)\big) & \sin\big(\theta_{\epsilon_1}(s)\big)\\
            \sin\big(\theta_{\epsilon_1}(s)\big) & \cos\big(\theta_{\epsilon_1}(s)\big)
        \end{pmatrix}.
    \end{aligned}
    \end{equation*}
Moreover, the assumptions on $f$ and $u$ given in Theorem~\ref{theorem:main} ensure that $\theta_{\epsilon_1}(0)=0$ and $\theta_{\epsilon_1}(1)=\textbf{sgn}(\delta_{pq})\pi$.
\end{remark}
\begin{lemma}
\label{lemma:theta_derivative_order}
$\|\dot{\theta}_{\epsilon_1}\|_{\infty}=\mathcal{O}(1/\epsilon_1)$.
\end{lemma}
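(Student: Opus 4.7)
The plan is to obtain a closed-form expression for $\dot{\theta}_{\epsilon_1}(s)$ from the matrix identity recorded in the Remark just above, and then to bound it via a two-region case analysis driven by the size of $|\Delta-f(s)|$. The formula will be singularity-free, and the two available lower bounds on $\lambda_{\epsilon_1}(s)^2$ will cover the regions complementarily.

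First I would read off from the Remark the two scalar relations
\[
\lambda_{\epsilon_1}(s)\cos\theta_{\epsilon_1}(s)=\tfrac{1}{2}(\Delta-f(s)),\qquad \lambda_{\epsilon_1}(s)\sin\theta_{\epsilon_1}(s)=\epsilon_1\delta_{pq}u(s).
\]
Differentiating both with respect to $s$, multiplying the first by $-\sin\theta_{\epsilon_1}$ and the second by $\cos\theta_{\epsilon_1}$, then adding to eliminate $\dot\lambda_{\epsilon_1}$, yields
\[
\dot\theta_{\epsilon_1}(s)=\frac{\epsilon_1\delta_{pq}\bigl[\dot u(s)(\Delta-f(s))+u(s)\dot f(s)\bigr]}{2\lambda_{\epsilon_1}(s)^{2}}.
\]
This identity is valid on all of $[0,1]$ (no division by zero even at the resonance point where $\Delta=f(s)$, since $u>0$ there keeps $\lambda_{\epsilon_1}>0$).

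The key observation is the pair of lower bounds $\lambda_{\epsilon_1}(s)^{2}\geq \tfrac{1}{4}(\Delta-f(s))^{2}$ and $\lambda_{\epsilon_1}(s)^{2}\geq \epsilon_1^{2}\delta_{pq}^{2}u(s)^{2}$, exactly one of which is effective in each regime. Since $f$ strictly increases from $v_0$ to $v_1$ on $[0,1]$ while $\Delta(\alpha)\in(v_0,v_1)$ uniformly in $\alpha\in\mathcal{D}$ (using compactness of $\mathcal{D}$ and continuity of $\Delta$), I can fix a constant $\mu>0$, independent of $\alpha$, such that the set $\{s\in[0,1]:|\Delta-f(s)|<\mu\}$ lies in a compact subinterval of $(0,1)$ on which $u$ is bounded below by some $u_0>0$.

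The case analysis then controls the supremum. On $\{|\Delta-f(s)|\geq\mu\}$, the first lower bound gives $\lambda_{\epsilon_1}^{2}\geq\mu^{2}/4$, hence $|\dot\theta_{\epsilon_1}(s)|\leq C_1\epsilon_1$ with $C_1$ depending only on $\|\dot u\|_\infty$, $\|u\|_\infty$, $\|\dot f\|_\infty$, $|\delta_{pq}|$ and $\mu$. On $\{|\Delta-f(s)|<\mu\}$, the second lower bound yields $\lambda_{\epsilon_1}^{2}\geq \epsilon_1^{2}\delta_{pq}^{2}u_0^{2}$, and since $|\Delta-f(s)|<\mu$ in this region, the numerator is $\mathcal{O}(\epsilon_1)$, so $|\dot\theta_{\epsilon_1}(s)|\leq C_2/\epsilon_1$. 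Taking the maximum of the two estimates gives $\|\dot\theta_{\epsilon_1}\|_\infty=\mathcal{O}(1/\epsilon_1)$. The only mild obstacle is the uniformity in $\alpha\in\mathcal{D}$ of the constants $\mu$ and $u_0$, which follows from the compactness of $\mathcal{D}$ together with assumption~\ref{cond_1} of Theorem~\ref{theorem:main}.
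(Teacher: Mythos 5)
Your proof is correct, and it reaches the conclusion by a finer route than the paper's own argument for this lemma. The paper's proof is a two-line estimate: it bounds $|\dot{\theta}_{\epsilon_1}(s)|$ pointwise by $\sqrt{\tfrac{1}{4}\dot f(s)^2+\epsilon_1^2\dot u(s)^2}\,/\,\lambda_{\epsilon_1}(s)$ (a Cauchy--Schwarz-type bound on the angular velocity of the planar vector $\bigl(\tfrac12(\Delta-f),\,\epsilon_1\delta_{pq}u\bigr)$) and then simply asserts $\|1/\lambda_{\epsilon_1}\|_{\infty}=\mathcal{O}(1/\epsilon_1)$. You instead start from the exact identity for $\dot\theta_{\epsilon_1}$ — which is precisely equation \eqref{eq:angular_acceleration} that the paper itself derives in the appendix for Lemma~\ref{lemma:int_theta} — and split $[0,1]$ into the near-resonance region $\{|\Delta-f(s)|<\mu\}$ and its complement, using $\lambda_{\epsilon_1}^2\geq\epsilon_1^2\delta_{pq}^2u_0^2$ on the former and $\lambda_{\epsilon_1}^2\geq\mu^2/4$ on the latter. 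The two approaches rest on the same quantitative fact, namely a uniform lower bound $\lambda_{\epsilon_1}\gtrsim\epsilon_1$; the difference is that the paper leaves this (and its uniformity in $\alpha$) unproved, whereas your choice of $\mu$ and $u_0$ — relying on $\Delta(\mathcal{D})$ being a compact subset of $(v_0,v_1)$, hence $\bar s(\alpha)$ staying in a fixed compact subinterval of $(0,1)$ where $u$ is bounded below — makes this explicit; this is exactly the kind of argument the paper deploys only later, with its $\sigma$ and $M_1$, in the appendix proof of Lemma~\ref{lemma:int_theta}. Your version is therefore slightly longer but more self-contained, and as a by-product it gives the sharper estimate $|\dot\theta_{\epsilon_1}|=\mathcal{O}(\epsilon_1)$ away from resonance, which the paper has to re-derive separately for Lemma~\ref{lemma:int_theta}.
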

\begin{proof}
    By the definitions \eqref{eq:def_lambda} and \eqref{eq:def_theta}, we can deduce that, for all $s\in[0,1]$,
    \begin{equation*}
        |\dot{\theta}_{\epsilon_1}(s)|\leq\frac{\sqrt{\frac{1}{4}\dot{f}(s)^2+\epsilon_1^2\dot{u}(s)^2}}{\lambda_{\epsilon_1}(s)}\leq\frac{\frac{1}{2}\|\dot{f}\|_{\infty}+\epsilon_1\|\dot{u}\|_{\infty}}{\lambda_{\epsilon_1}(s)}.
    \end{equation*}
      Since $\|1/\lambda_{\epsilon_1}\|_{\infty}=\mathcal{O}(1/\epsilon_1)$, this concludes the proof.
\end{proof}
Since $f$ in \eqref{eq:control} is strictly increasing on $[0,1]$, let us define the function $\bar{s}:\mathcal{D}\rightarrow(0,1)$ such that
\begin{equation}
\label{eq:def_s_bar}
     s=\bar{s}(\alpha)\iff \Delta(\alpha)=f(\bar{s}(\alpha)).
\end{equation}
\begin{lemma}
\label{lemma:int_theta}
    There exist $M>0$ and $\eta>0$ such that, for all $\epsilon_1\in(0,\eta)$, $ \int_{0}^{1}\left|\dot{\theta}_{\epsilon_1}(s)\right|\text{d}s<M$.
\end{lemma}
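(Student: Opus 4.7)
The plan is to derive an explicit formula for $\dot\theta_{\epsilon_1}(s)$ and then bound the integral by splitting $[0,1]$ into a neighborhood of the resonance point $\bar s(\alpha)$ and its complement, tracking carefully the compensation between large derivative and small measure.

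From the identities $\cos\theta_{\epsilon_1}(s)=(\Delta-f(s))/(2\lambda_{\epsilon_1}(s))$ and $\sin\theta_{\epsilon_1}(s)=\epsilon_1\delta_{pq}u(s)/\lambda_{\epsilon_1}(s)$, which follow directly from \eqref{eq:def_lambda}--\eqref{eq:def_theta}, a short calculation (or, equivalently, writing $\theta_{\epsilon_1}$ as the argument of the complex curve $z(s)=(\Delta-f(s))/2+i\epsilon_1\delta_{pq}u(s)$ and computing $\dot\theta_{\epsilon_1}=\mathrm{Im}(\dot z/z)$) yields
\[
\dot\theta_{\epsilon_1}(s)=\frac{\epsilon_1\delta_{pq}\bigl(\dot f(s)\,u(s)+\dot u(s)\,(\Delta-f(s))\bigr)}{2\lambda_{\epsilon_1}(s)^2}.
\]
I would then change variable to $v=f(s)-\Delta$, which vanishes at the resonance point $\bar s(\alpha)$ defined in \eqref{eq:def_s_bar}; since $f\in\mathcal{C}^2$ and $\dot f>0$ on $[0,1]$, we have $dv=\dot f(s)\,ds$ with $\dot f$ uniformly bounded from below by a positive constant. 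This transforms the problem into estimating
\[
\int_0^1|\dot\theta_{\epsilon_1}(s)|\,ds \leq C\int_{f(0)-\Delta}^{f(1)-\Delta}\frac{\epsilon_1\bigl(u(s(v))+|v|\bigr)}{v^2/4+\epsilon_1^2\delta_{pq}^2\,u(s(v))^2}\,dv,
\]
where $C$ depends only on $\|\dot f\|_\infty$, $\|\dot u\|_\infty$, $|\delta_{pq}|$, and $\min\dot f$.

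I would then split at $|v|=\epsilon_1$. For $|v|\leq\epsilon_1$, the corresponding $s$ lies in an interval of length $O(\epsilon_1)$ around $\bar s(\alpha)$; since $\mathcal{D}$ is compact and $\bar s(\cdot)$ is continuous with values in $(0,1)$, the set $\{\bar s(\alpha):\alpha\in\mathcal{D}\}$ is a compact subset of $(0,1)$, and $u$ admits a uniform positive lower bound $u_*>0$ on a neighborhood of it. Bounding $\lambda_{\epsilon_1}^2\geq\epsilon_1^2\delta_{pq}^2u_*^2$ makes the integrand $O(1/\epsilon_1)$; multiplying by the measure $O(\epsilon_1)$ gives an $O(1)$ contribution. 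For $|v|>\epsilon_1$, I would use $\lambda_{\epsilon_1}^2\geq v^2/4$ to bound the integrand by $4\epsilon_1\|u\|_\infty/v^2+4\epsilon_1/|v|$: the first term integrates to $O(1)$ because $\int_{|v|>\epsilon_1}dv/v^2=O(1/\epsilon_1)$, while the second contributes $O(\epsilon_1|\log\epsilon_1|)=o(1)$. Uniformity of $M$ with respect to $\alpha\in\mathcal{D}$ and $\delta_{pq}\in\mathcal{I}_{pq}$ follows from compactness of these parameter sets and from $0\notin\mathcal{I}_{pq}$, which provides a uniform lower bound on $|\delta_{pq}|$.

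The delicate point is precisely the near-resonance regime: Lemma~\ref{lemma:theta_derivative_order} alone only gives $\|\dot\theta_{\epsilon_1}\|_\infty=\mathcal{O}(1/\epsilon_1)$, which would naively produce a bound of order $1/\epsilon_1$ for the integral. What saves us is the crucial fact that the large values of $|\dot\theta_{\epsilon_1}|$ are concentrated on a set of measure proportional to $\epsilon_1$, so that the product remains bounded as $\epsilon_1\to 0$; geometrically, this reflects the fact that the curve $z(s)$ sweeps from the positive real axis to the negative one crossing a $\epsilon_1$-sized neighborhood of the origin, and its total angular variation remains of order $\pi$.
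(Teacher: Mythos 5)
Your proof is correct, but it takes a genuinely different route from the paper. You both start from the same explicit formula $\dot\theta_{\epsilon_1}(s)=\epsilon_1\delta_{pq}\bigl(\dot f(s)u(s)+\dot u(s)(\Delta-f(s))\bigr)/(2\lambda_{\epsilon_1}(s)^2)$, but the paper then avoids any size estimate near the resonance: it observes that the numerator is strictly positive on a fixed interval $[\bar s(\alpha)-\sigma,\bar s(\alpha)+\sigma]$, so $\dot\theta_{\epsilon_1}$ has constant sign there and the near-resonance contribution equals the angular increment $|\theta_{\epsilon_1}(\bar s+\sigma)-\theta_{\epsilon_1}(\bar s-\sigma)|\le\pi$; away from that interval $|\Delta-f|$ is bounded below, giving $|\dot\theta_{\epsilon_1}|=O(\epsilon_1)$ and the clean bound $\pi+O(\epsilon_1)$. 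You instead run a quantitative two-regime estimate after the substitution $v=f(s)-\Delta$: on $|v|\le\epsilon_1$ you bound the integrand by $O(1/\epsilon_1)$ (using a lower bound $u\ge u_*>0$ near the resonance set and $|\delta_{pq}|$ bounded away from zero) against a set of measure $O(\epsilon_1)$, and on $|v|>\epsilon_1$ you integrate the explicit tails $\epsilon_1/v^2$ and $\epsilon_1/|v|$, getting $O(1)+O(\epsilon_1\log(1/\epsilon_1))$. The paper's argument is shorter and yields the sharp constant $\pi$ without needing any lower bound on $u$ beyond positivity at $\bar s(\alpha)$; yours is more robust (it does not use sign constancy at all) and makes the balance between the $O(1/\epsilon_1)$ peak and its $O(\epsilon_1)$ support explicit, which is the mechanism you correctly identify at the end. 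One caveat: your uniform lower bound $u_*$ on a neighborhood of $\{\bar s(\alpha):\alpha\in\mathcal D\}$ invokes compactness of $\mathcal D$, whereas the paper only assumes $\mathcal D$ closed; for a fixed $\alpha$ your argument is complete, and the uniformity in $\alpha$ is treated no more carefully in the paper's own proof (its $\sigma$, $M_1$, $M_2$ also implicitly depend on $\alpha$), so this is a shared imprecision rather than a gap specific to your proof.
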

The proof of the lemma can be found in the appendix.
\begin{remark}
In the following, for simplicity of notation, we will denote $\theta_{\epsilon_1}(\epsilon_1\epsilon_2 t)$ and $\dot{\theta}_{\epsilon_1}(\epsilon_1\epsilon_2 t)$ simply as $\theta_{\epsilon_1}$ and $\dot{\theta}_{\epsilon_1}$. Notice that $\frac{\text{d}}{\text{d}t}\theta_{\epsilon_1}=\epsilon_1\epsilon_2\dot{\theta}_{\epsilon_1}$.
\end{remark}
\subsection*{Step 4: Second-order elimination}
Let us define $ \tilde{\phi}(t)=\int_{0}^{t}{\lambda}_{\epsilon_1}(\epsilon_1\epsilon_2\tau)\text{d}\tau$,
 and introduce the change of variables $\hat{\psi}_{3}(t)=U_{4}^{\dagger}(t)\hat{\psi}_{4}(t)$, where
\begin{equation}
\begin{aligned}
\label{eq:fourth_cv}
    U_{4}(t)=& \sin\left(\frac{\theta_{\epsilon_1}}{2}\right)\Big(-\exp(-i\tilde{\phi}(t))\textbf{e}_{pq}+\exp(i\tilde{\phi}(t))\textbf{e}_{qp}\Big)\\
&+\cos\left(\frac{\theta_{\epsilon_1}}{2}\right)\Big(\exp(-i\tilde{\phi}(t))\textbf{e}_{pp}+\exp(i\tilde{\phi}(t))\textbf{e}_{qq}\Big)+\sum_{j\notin\{p,q\}}\textbf{e}_{jj}.
\end{aligned}
\end{equation}
The dynamics of $\hat{\psi}_{4}$ are characterized by the Hamiltonian
\begin{equation}
\begin{aligned}
\label{eq:def_hat_H_4}
    \hat{H}_{4}(t)=&-\frac{\epsilon_1\epsilon_2}{2}\dot{\theta}_{\epsilon_1}B_{pq}(-2\tilde{\phi}(t))+\epsilon_1^2 U_4(t)\Big(\sum_{j=1}^{n}h_{jj}^{0}(\epsilon_1\epsilon_2 t)\textbf{e}_{jj}\Big)U_{4}^{\dagger}(t)\\
    &+\epsilon_1^2R(t)+\epsilon_1^2\tilde{R}_{pq}(t)+\epsilon_1^2\tilde{R}_{p}(t)+\epsilon_1^2\tilde{R}_{q}(t)+\mathcal{O}(\epsilon_1^3+\epsilon_1^2\epsilon_2).
\end{aligned}
\end{equation}
Here $R(t)$ is given in equation \eqref{eq:def_R}, $\tilde{R}_{pq}(t)$ is given by
\begin{equation}
      \begin{aligned}
         \tilde{R}_{pq}(t)=&-\sin(\theta_{\epsilon_1})h_{pq}^{2}(\epsilon_1\epsilon_2 t)A_{pp}\left(\phi(t)\right)+\sin(\theta_{\epsilon_1})h_{pq}^{2}(\epsilon_1\epsilon_2 t)A_{qq}\left(\phi(t)\right)\\
    &+\cos^2\left(\frac{\theta_{\epsilon_1}}{2}\right)h_{pq}^{2}(\epsilon_1\epsilon_2 t)A_{pq}\left(\tilde{\phi}_{pq}^{c}(t)\right)-\sin^2\left(\frac{\theta_{\epsilon_1}}{2}\right)h_{pq}^{2}(\epsilon_1\epsilon_2 t)A_{pq}\left(\tilde{\phi}_{pq}^{s}(t)\right),
    \end{aligned}
\end{equation}
where $\phi(t)$ is defined in equation \eqref{eq:define_phi} and
\begin{equation}
\begin{aligned}
\label{eq:def_tilde_phi_1}
    \tilde{\phi}_{pq}^{c}(t)&=-2\tilde{\phi}(t)+\phi(t),\ 
    \tilde{\phi}_{pq}^{s}(t)=-2\tilde{\phi}(t)-\phi(t),
\end{aligned}
\end{equation}
$\tilde{R}_{p}(t)$ is given by
\begin{equation}
\begin{aligned}
    \tilde{R}_{p}(t)=&\sum_{j=1}^{p-1}\Big(\cos\left(\frac{\theta_{\epsilon_1}}{2}\right)h_{jp}^{2}(\epsilon_1\epsilon_2 t)A_{jp}(\tilde{\phi}_{jp}^{c}(t))-\sin\left(\frac{\theta_{\epsilon_1}}{2}\right)h_{jq}^{2}(\epsilon_1\epsilon_2 t)A_{jp}(\tilde{\phi}_{jp}^{s}(t))\Big)\\
&+\sum_{j=p+1}^{q-1}\Big(\cos\left(\frac{\theta_{\epsilon_1}}{2}\right)h_{pj}^2(\epsilon_1\epsilon_2 t)A_{pj}(\tilde{\phi}_{pj}^{c}(t))-\sin\left(\frac{\theta_{\epsilon_1}}{2}\right)h_{jq}^{2}(\epsilon_1\epsilon_2 t)A_{pj}(\tilde{\phi}_{pj}^{s}(t))\Big)\\
&+\sum_{j=q+1}^{n}\Big(\cos\left(\frac{\theta_{\epsilon_1}}{2}\right)h_{pj}^{2}(\epsilon_1\epsilon_2 t)A_{pj}(\tilde{\phi}_{pj}^{c}(t))-\sin\left(\frac{\theta_{\epsilon_1}}{2}\right)h_{qj}^{2}(\epsilon_1\epsilon_2 t)A_{pj}(\tilde{\phi}_{pj}^{s}(t))\Big),
\end{aligned}
\end{equation}
where
\begin{equation}
\label{eq:def_phi_R_p}
    \begin{aligned}
        &\left\{\begin{matrix}
        \tilde{\phi}_{jp}^{c}(t)=\lambda_j t+\tilde{\phi}(t)+\frac{5\phi(t)}{2},\\
        \tilde{\phi}_{jp}^{s}(t)=\lambda_j t+\tilde{\phi}(t)+\frac{3\phi(t)}{2},
        \end{matrix}\ \text{ if }1\leq j<p\right.;\\
        &\left\{\begin{matrix}
            \tilde{\phi}_{pj}^{c}(t)=-\tilde{\phi}(t)-\lambda_j t+\frac{3\phi(t)}{2},\\
            \tilde{\phi}_{pj}^{s}(t)=-\tilde{\phi}(t)-\lambda_j t -\frac{3\phi(t)}{2},
        \end{matrix}\ \text{ if }p<j<q\right.;\\
        &\left\{\begin{matrix}
            \tilde{\phi}_{pj}^{c}(t)=-\tilde{\phi}(t)-\lambda_j t+\frac{3\phi(t)}{2},\\
            \tilde{\phi}_{pj}^{s}(t)=-\tilde{\phi}(t)-\lambda_j t+\frac{5\phi(t)}{2},
        \end{matrix}\ \text{ if }q<j\leq n\right. ,
    \end{aligned}
\end{equation}
and, finally, $\tilde{R}_{q}(t)$ is given by
\begin{equation}
    \begin{aligned}
        \tilde{R}_{q}(t)=&\sum_{j=1}^{p-1}\Big(\sin\left(\frac{\theta_{\epsilon_1}}{2}\right)h_{jp}^{2}(\epsilon_1\epsilon_2 t)A_{jq}(\tilde{\phi}_{jq}^{s}(t))+\cos\left(\frac{\theta_{\epsilon_1}}{2}\right)h_{jq}^{2}(\epsilon_1\epsilon_2 t)A_{jq}(\tilde{\phi}_{jq}^{c}(t))\Big)\\
        &+\sum_{j=p+1}^{q-1}\Big(\sin\left(\frac{\theta_{\epsilon_1}}{2}\right)h_{pj}^{2}(\epsilon_1\epsilon_2 t)A_{jq}(\tilde{\phi}_{jq}^{s}(t))+\cos\left(\frac{\theta_{\epsilon_1}}{2}\right)h_{jq}^2(\epsilon_1\epsilon_2 t)A_{jq}(\tilde{\phi}_{jq}^{c}(t))\Big)\\
        &+\sum_{j=q+1}^{n}\Big(\sin\left(\frac{\theta_{\epsilon_1}}{2}\right)h_{pj}^{2}(\epsilon_1\epsilon_2 t)A_{qj}(\tilde{\phi}_{qj}^{s}(t))+\cos\left(\frac{\theta_{\epsilon_1}}{2}\right)h_{qj}^{2}(\epsilon_1\epsilon_2 t)A_{qj}(\tilde{\phi}_{qj}^{c}(t))\Big),
    \end{aligned}
\end{equation}
where
\begin{equation}
\label{eq:def_phi_R_q}
    \begin{aligned}
        &\left\{\begin{matrix}
        \tilde{\phi}_{jq}^{c}(t)=\lambda_j t-\tilde{\phi}(t)+\frac{3\phi(t)}{2},\\
        \tilde{\phi}_{jq}^{s}(t)=\lambda_j t-\tilde{\phi}(t)+\frac{5\phi(t)}{2},
        \end{matrix}\ \text{ if }1\leq j<p\right.;\\
        &\left\{\begin{matrix}
            \tilde{\phi}_{jq}^{c}(t)=\lambda_j t-\tilde{\phi}(t)+\frac{3\phi(t)}{2},\\
            \tilde{\phi}_{jq}^{s}(t)=\lambda_j t-\tilde{\phi}(t) -\frac{3\phi(t)}{2},
        \end{matrix}\ \text{ if }p<j<q\right.;\\
        &\left\{\begin{matrix}
            \tilde{\phi}_{qj}^{c}(t)=\tilde{\phi}(t)-\lambda_j t+\frac{5\phi(t)}{2},\\
            \tilde{\phi}_{qj}^{s}(t)=\tilde{\phi}(t)-\lambda_j t+\frac{3\phi(t)}{2},
        \end{matrix}\ \text{ if }q<j\leq n\right. .
    \end{aligned}
\end{equation}
The proof of the following three lemmas can be found in the appendix.
\begin{lemma}
\label{lemma:function_integral_1}
    For $\epsilon_1,\epsilon_2>0$, given two smooth functions $g:[-\pi,\pi]\rightarrow\mathbb{R}$ and $h:[0,1]\rightarrow\mathbb{R}$, define $F_{\epsilon_1}(s)=g(\theta_{\epsilon_1}(s))h(s)$, where $\theta$ is defined as in \eqref{eq:def_theta}. Then there exist $c>0$ and $\eta>0$ such that for all $\epsilon_1\in(0,\eta)$ and $s\in[0,1]$, $\int_{0}^{s} \left|\dot{F}_{\epsilon_1}(\xi)\right|\text{d}\xi<c$.
\end{lemma}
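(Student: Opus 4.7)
\textbf{Proof plan for Lemma~\ref{lemma:function_integral_1}.} The plan is to differentiate $F_{\epsilon_1}$ via the product rule and then reduce the bound on $\int_0^s|\dot F_{\epsilon_1}(\xi)|\,\mathrm{d}\xi$ to the uniform bound on $\int_0^1|\dot\theta_{\epsilon_1}(\xi)|\,\mathrm{d}\xi$ provided by Lemma~\ref{lemma:int_theta}.

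First I would write
\begin{equation*}
\dot F_{\epsilon_1}(s)=g'(\theta_{\epsilon_1}(s))\,\dot\theta_{\epsilon_1}(s)\,h(s)+g(\theta_{\epsilon_1}(s))\,\dot h(s),
\end{equation*}
which is legitimate because $g\in\mathcal{C}^1([-\pi,\pi],\mathbb{R})$, $h\in\mathcal{C}^1([0,1],\mathbb{R})$, and $\theta_{\epsilon_1}$ is smooth on $[0,1]$ by the definition~\eqref{eq:def_theta} together with the fact that $\lambda_{\epsilon_1}(s)>0$ on $[0,1]$ (since $\delta_{pq}\neq 0$ and $u(s)>0$ on $(0,T)$ ensure $\lambda_{\epsilon_1}$ does not vanish at the endpoints either, using the assumption $0\notin\mathcal{I}_{pq}$ combined with Hypothesis~\ref{cond_1}). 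Since $\theta_{\epsilon_1}(s)\in[-\pi,\pi]$ by construction, the compositions $g\circ\theta_{\epsilon_1}$ and $g'\circ\theta_{\epsilon_1}$ are uniformly bounded by $\|g\|_\infty$ and $\|g'\|_\infty$ respectively, independently of $\epsilon_1$.

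Next I would bound the two contributions separately. For the second term, $|g(\theta_{\epsilon_1}(s))\dot h(s)|\leq \|g\|_\infty\|\dot h\|_\infty$, so its contribution to $\int_0^s|\dot F_{\epsilon_1}(\xi)|\,\mathrm{d}\xi$ is at most $\|g\|_\infty\|\dot h\|_\infty$, uniformly in $\epsilon_1$ and $s\in[0,1]$. For the first term, I would estimate
\begin{equation*}
\int_0^s\bigl|g'(\theta_{\epsilon_1}(\xi))\dot\theta_{\epsilon_1}(\xi)h(\xi)\bigr|\,\mathrm{d}\xi\leq \|g'\|_\infty\|h\|_\infty\int_0^1|\dot\theta_{\epsilon_1}(\xi)|\,\mathrm{d}\xi,
\end{equation*}
and then invoke Lemma~\ref{lemma:int_theta} to bound the remaining integral by $M$, uniformly for $\epsilon_1\in(0,\eta)$.

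Combining the two estimates yields the desired constant $c=\|g\|_\infty\|\dot h\|_\infty+M\|g'\|_\infty\|h\|_\infty$, valid for all $\epsilon_1\in(0,\eta)$ and $s\in[0,1]$. There is no real obstacle here: the whole argument rests on the prior lemma taming $\dot\theta_{\epsilon_1}$, whose pointwise size could blow up like $1/\epsilon_1$ (Lemma~\ref{lemma:theta_derivative_order}) but whose $L^1$ norm stays bounded because $\theta_{\epsilon_1}$ performs a nearly monotone sweep from $0$ to $\pm\pi$. The only subtle point to double-check when writing the proof is the smoothness of $\theta_{\epsilon_1}$, in particular that $\lambda_{\epsilon_1}$ does not vanish on $[0,1]$, which follows from $0\notin\mathcal{I}_{pq}$ together with the constraints on $f$ at the endpoints.
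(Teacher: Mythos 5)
Your proposal is correct and follows essentially the same argument as the paper: product rule, uniform bounds $\|g\|_\infty$, $\|g'\|_\infty$, $\|h\|_\infty$, $\|\dot h\|_\infty$, and Lemma~\ref{lemma:int_theta} to control $\int_0^1|\dot\theta_{\epsilon_1}|$, yielding the same constant $c=M\|\dot g\|_\infty\|h\|_\infty+\|g\|_\infty\|\dot h\|_\infty$. The extra remark on the smoothness of $\theta_{\epsilon_1}$ via the non-vanishing of $\lambda_{\epsilon_1}$ is a harmless additional check not spelled out in the paper.
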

\begin{lemma}
\label{lemma:van_der_corput_1}
    There exists $\eta>0$ such that for all $\epsilon_1\in(0,\eta)$, $\alpha\in\mathcal{D}$, and $s\in[0,1]$, we have
    \begin{equation*}
        \begin{aligned}
            -2\dot{\lambda}_{\epsilon_1}(s)+\dot{f}(s)&>\frac{1}{2}\dot{f}(s),\text{ if } s\in[0,\bar{s}(\alpha)],\\
            -2\lambda_{\epsilon_1}(s)+f(s)&>\frac{1}{2}\Delta(\alpha)>\frac{v_0}{2},\text{ if } s\in[\bar{s}(\alpha),1].
        \end{aligned}
    \end{equation*}
    Here $\bar{s}(\alpha)$ is defined as in equation \eqref{eq:def_s_bar}.
\end{lemma}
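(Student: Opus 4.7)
My plan is to exploit the structure $\lambda_{\epsilon_1}(s)^2=a(s)^2+b(s)^2$ with $a(s):=\tfrac{1}{2}(\Delta(\alpha)-f(s))$ and $b(s):=\epsilon_1\delta_{pq}u(s)$, and to treat the two intervals $[0,\bar{s}(\alpha)]$ and $[\bar{s}(\alpha),1]$ separately. Differentiating gives
\[
\dot{\lambda}_{\epsilon_1}(s)=\frac{a(s)\dot{a}(s)+b(s)\dot{b}(s)}{\lambda_{\epsilon_1}(s)},
\]
and this expression makes sense on the whole of $[0,1]$ because $\lambda_{\epsilon_1}(s)>0$ everywhere: indeed $b$ vanishes only at $s=0$ and $s=1$, where Hypothesis~\ref{cond_1} ($\Delta(\alpha)\in(v_0,v_1)$) guarantees that $a$ does not vanish.

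For the first inequality, I would use the observation that on $[0,\bar{s}(\alpha)]$ one has $a(s)\geq 0$ while $\dot{a}(s)=-\tfrac{1}{2}\dot{f}(s)<0$, so $a\dot{a}\leq 0$ and the $a$-contribution to $\dot{\lambda}_{\epsilon_1}$ is non-positive. What remains is controlled by $\frac{|b\dot{b}|}{\lambda_{\epsilon_1}}\leq|\dot{b}|=\epsilon_1|\delta_{pq}||\dot{u}(s)|$, where I used $|b|\leq\lambda_{\epsilon_1}$. Since $\dot{f}$ admits a uniform positive minimum $m$ on $[0,1]$, $\|\dot{u}\|_\infty<\infty$ and $|\delta_{pq}|\leq\max(|\delta_{pq}^0|,|\delta_{pq}^1|)$, I can choose $\eta$ small enough that $\epsilon_1<\eta$ implies $\dot{\lambda}_{\epsilon_1}(s)\leq\epsilon_1|\delta_{pq}|\|\dot{u}\|_\infty<\tfrac{1}{4}\dot{f}(s)$, which rearranges to the desired $-2\dot{\lambda}_{\epsilon_1}(s)+\dot{f}(s)>\tfrac{1}{2}\dot{f}(s)$.

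For the second inequality, on $[\bar{s}(\alpha),1]$ one has $a(s)\leq 0$, hence $|a(s)|=\tfrac{1}{2}(f(s)-\Delta(\alpha))$. The elementary estimate $\sqrt{a^2+b^2}\leq|a|+|b|$ yields $\lambda_{\epsilon_1}(s)\leq\tfrac{1}{2}(f(s)-\Delta(\alpha))+\epsilon_1|\delta_{pq}|u(s)$, and therefore $-2\lambda_{\epsilon_1}(s)+f(s)\geq\Delta(\alpha)-2\epsilon_1|\delta_{pq}|u(s)$. Using $\Delta(\alpha)>v_0$ from Hypothesis~\ref{cond_1} and shrinking $\eta$ further so that $2\epsilon_1|\delta_{pq}|\|u\|_\infty<v_0/2\leq\Delta(\alpha)/2$, I recover $-2\lambda_{\epsilon_1}(s)+f(s)>\Delta(\alpha)/2>v_0/2$.

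Uniformity in $\alpha\in\mathcal{D}$ and in the admissible $\delta_{pq}\in\mathcal{I}_{pq}$ is automatic, since every constant entering the choice of $\eta$ ($m$, $\|\dot{u}\|_\infty$, $\|u\|_\infty$, $v_0$, and $\sup_{\mathcal{I}_{pq}}|\delta_{pq}|$) is independent of both. I do not foresee any serious obstacle; the only delicate point is the well-definedness of $\dot{\lambda}_{\epsilon_1}$ near the zero of $a$ at $s=\bar{s}(\alpha)$, but this is handled by $b(\bar{s}(\alpha))=\epsilon_1\delta_{pq}u(\bar{s}(\alpha))\neq 0$, which holds because $0\notin\mathcal{I}_{pq}$ and $\bar{s}(\alpha)\in(0,1)$ implies $u(\bar{s}(\alpha))>0$.
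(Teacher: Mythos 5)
Your proof is correct and follows essentially the same route as the paper: on $[0,\bar{s}(\alpha)]$ you discard the sign-definite term ($a\dot{a}\le 0$, i.e.\ $(\Delta-f)\dot{f}\ge 0$) and bound the remainder by $\epsilon_1|\delta_{pq}|\|\dot{u}\|_{\infty}$ exactly as in the paper, and on $[\bar{s}(\alpha),1]$ you use the same triangle-inequality estimate $2\lambda_{\epsilon_1}\le|\Delta-f|+2\epsilon_1|\delta_{pq}|u$ together with $\Delta(\alpha)>v_0$. Your added remarks on the well-definedness of $\dot{\lambda}_{\epsilon_1}$ at $\bar{s}(\alpha)$ and on uniformity in $\alpha$ and $\delta_{pq}$ are correct and only make explicit points the paper leaves implicit.
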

\begin{lemma}
    \label{lemma:van_der_corput_2}  There exists $\eta>0$ such that, for all $\epsilon_1\in(0,\eta)$ and $s\in[0,1]$, $ \left|\dot{\lambda}_{\epsilon_1}(s)\right|<|\dot{f}(s)|$.
\end{lemma}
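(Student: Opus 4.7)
\emph{Proof plan.} I would begin by differentiating the defining formula~\eqref{eq:def_lambda}. From $\lambda_{\epsilon_1}(s)^2=\tfrac14(\Delta-f(s))^2+\epsilon_1^2\delta_{pq}^2 u(s)^2$ one obtains
\[
\dot\lambda_{\epsilon_1}(s)=\frac{-\tfrac12(\Delta-f(s))\dot f(s)+\epsilon_1^2\delta_{pq}^2\, u(s)\dot u(s)}{\lambda_{\epsilon_1}(s)}.
\]
The two summands in the numerator are then controlled separately, each using a different elementary lower bound on $\lambda_{\epsilon_1}$: namely $\lambda_{\epsilon_1}(s)\ge |\Delta-f(s)|/2$ and $\lambda_{\epsilon_1}(s)\ge \epsilon_1|\delta_{pq}||u(s)|$, both immediate from~\eqref{eq:def_lambda}. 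The first inequality yields $\bigl|\tfrac12(\Delta-f(s))\dot f(s)/\lambda_{\epsilon_1}(s)\bigr|\le \tfrac12|\dot f(s)|$, while the second converts the second summand---which is only $\mathcal{O}(1)$ a priori because $\lambda_{\epsilon_1}(s)$ can degenerate to $\mathcal{O}(\epsilon_1)$ near $s=\bar s(\alpha)$---into the manifestly $\mathcal{O}(\epsilon_1)$ quantity $\epsilon_1|\delta_{pq}||\dot u(s)|$. Combined, these give
\[
|\dot\lambda_{\epsilon_1}(s)|\le \tfrac12|\dot f(s)|+\epsilon_1|\delta_{pq}|\,\|\dot u\|_\infty.
\]

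The conclusion is then extracted from the strict positivity of $\dot f$. By hypothesis~(ii) of Theorem~\ref{theorem:main} and the continuity of $\dot f$ on the compact interval $[0,1]$, we have $m:=\min_{s\in[0,1]}\dot f(s)>0$. Setting $\delta^\star:=\sup\{|\delta_{pq}|:\delta_{pq}\in\mathcal{I}_{pq}\}$, which is finite by compactness of $\mathcal{I}_{pq}$, it suffices to choose $\eta>0$ with $\eta\,\delta^\star\|\dot u\|_\infty<m/2$; then for every $\epsilon_1\in(0,\eta)$, $s\in[0,1]$, $\alpha\in\mathcal{D}$ and $\delta_{pq}\in\mathcal{I}_{pq}$ the bound above gives $|\dot\lambda_{\epsilon_1}(s)|<\tfrac12|\dot f(s)|+m/2\le|\dot f(s)|$, with uniform dependence on $\alpha$ and $\delta_{pq}$. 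No serious difficulty is expected; the only subtlety worth flagging is the symmetric use of both lower bounds on $\lambda_{\epsilon_1}$, which is precisely what neutralises the otherwise singular behaviour of the second summand at the resonance point $\bar s(\alpha)$, where $\lambda_{\epsilon_1}$ itself becomes $\mathcal{O}(\epsilon_1)$.
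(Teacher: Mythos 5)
Your argument is in substance the paper's: both derive the pointwise bound $|\dot{\lambda}_{\epsilon_1}(s)|\le\tfrac12|\dot f(s)|+\epsilon_1|\delta_{pq}|\,\|\dot u\|_\infty$ and then conclude by taking $\eta$ small using $\min_{[0,1]}\dot f>0$. The only methodological difference is that the paper obtains the intermediate bound in one stroke via Cauchy--Schwarz applied to the numerator of $\dot{\lambda}_{\epsilon_1}$, whereas you estimate the two summands separately using the lower bounds $\lambda_{\epsilon_1}\ge\tfrac12|\Delta-f|$ and $\lambda_{\epsilon_1}\ge\epsilon_1|\delta_{pq}||u|$; this is an equally elementary and valid route, and your explicit uniformity in $\delta_{pq}$ over $\mathcal{I}_{pq}$ is a small improvement on the paper's statement. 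One slip to fix: differentiating $\lambda_{\epsilon_1}^2=\tfrac14(\Delta-f)^2+\epsilon_1^2\delta_{pq}^2u^2$ gives $\dot{\lambda}_{\epsilon_1}=\bigl(-\tfrac14(\Delta-f)\dot f+\epsilon_1^2\delta_{pq}^2u\dot u\bigr)/\lambda_{\epsilon_1}$, not the coefficient $-\tfrac12$ you wrote in the first term; with your coefficient and only $\lambda_{\epsilon_1}\ge\tfrac12|\Delta-f|$ available, the first summand would be bounded by $|\dot f|$ rather than $\tfrac12|\dot f|$, and the strict inequality of the lemma would be lost. The two factor-of-two errors cancel, so your final displayed bound is the correct one and the remainder of the argument goes through verbatim once the derivative formula is corrected.
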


In the proofs of the following lemmas, we are going to apply the adaptations of the classical Van der Corput lemma (see for example \cite{stein1993harmonic}) to quantum systems proposed in \cite{AugierMCRF}.
\begin{lemma}
\label{lemma:oscillatory_int_1}
$\int_{0}^{t}\tilde{R}_{pq}(\tau)\text{d}\tau=\mathcal{O}\left(\frac{1}{\sqrt{\epsilon_1\epsilon_2}}\right)$ over $[0,1/(\epsilon_1\epsilon_2)]$.
\end{lemma}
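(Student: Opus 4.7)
The plan is to decompose $\tilde{R}_{pq}$ into matrix-valued oscillatory contributions and to bound each integrated piece by the quantum-adapted van der Corput estimates of \cite{AugierMCRF}, invoking Lemmas~\ref{lemma:function_integral_1}--\ref{lemma:van_der_corput_2} to verify the required amplitude and phase hypotheses. Looking at the definition of $\tilde{R}_{pq}$ and expanding $A_{pp}(\phi(t))=\cos(\phi(t))\textbf{e}_{pp}$, $A_{qq}(\phi(t))=\cos(\phi(t))\textbf{e}_{qq}$ and $A_{pq}(E)=e^{iE}\textbf{e}_{pq}+e^{-iE}\textbf{e}_{qp}$, the four summands become a finite linear combination of scalar terms of the form $a_{\epsilon_1}(t)\,e^{i\Phi(t)}$, where $a_{\epsilon_1}$ is a product of a smooth function of $\theta_{\epsilon_1}(\epsilon_1\epsilon_2 t)$ with $h_{pq}^{2}(\epsilon_1\epsilon_2 t)$, and $\Phi$ is, up to a sign, one of the three phases $\phi(t)$, $\tilde{\phi}_{pq}^{c}(t)$, $\tilde{\phi}_{pq}^{s}(t)$. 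Lemma~\ref{lemma:function_integral_1} applied to each amplitude provides the uniform bound on the total variation of $a_{\epsilon_1}$ on $[0,1/(\epsilon_1\epsilon_2)]$ that the van der Corput estimates demand on the amplitude factor.

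For the phases $\phi$ and $\tilde{\phi}_{pq}^{s}$, the derivatives $\phi'(t)=f(\epsilon_1\epsilon_2 t)\geq v_0$ and $|(\tilde{\phi}_{pq}^{s})'(t)|=2\lambda_{\epsilon_1}+f\geq v_0$ are bounded away from zero uniformly in $(\epsilon_1,\epsilon_2,\alpha)$, so the first-order van der Corput lemma produces an $\mathcal{O}(1)$ bound for the corresponding primitives. The delicate term carries the phase $\tilde{\phi}_{pq}^{c}(t)=-2\tilde{\phi}(t)+\phi(t)$, whose derivative $F(t)=-2\lambda_{\epsilon_1}(\epsilon_1\epsilon_2 t)+f(\epsilon_1\epsilon_2 t)$ may change sign near the critical time $t_\star=\bar{s}(\alpha)/(\epsilon_1\epsilon_2)$. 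I would split the integration interval at $t_\star$: on $[t_\star, 1/(\epsilon_1\epsilon_2)]$ the second part of Lemma~\ref{lemma:van_der_corput_1} yields $F(t)>v_0/2$, so first-order van der Corput gives an $\mathcal{O}(1)$ contribution; on $[0,t_\star]$ the first part of Lemma~\ref{lemma:van_der_corput_1} yields $F'(t)=\epsilon_1\epsilon_2(-2\dot{\lambda}_{\epsilon_1}+\dot{f})\geq c\,\epsilon_1\epsilon_2$ for some $c>0$, while Lemma~\ref{lemma:van_der_corput_2} ensures that $F$ is monotone, so second-order van der Corput bounds the integral by $C/\sqrt{c\,\epsilon_1\epsilon_2}=\mathcal{O}(1/\sqrt{\epsilon_1\epsilon_2})$. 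Summing the finitely many contributions gives the announced estimate.

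The main obstacle is to certify that all the constants produced by the van der Corput estimates are uniform in $\alpha\in\mathcal{D}$: both the splitting point $\bar{s}(\alpha)$ and several of the lower bounds depend on $\alpha$ through $\Delta(\alpha)$, so one must combine the quantitative statements of Lemmas~\ref{lemma:van_der_corput_1} and \ref{lemma:van_der_corput_2}, which are already phrased uniformly over $\mathcal{D}$, with the uniform amplitude control of Lemma~\ref{lemma:function_integral_1}. The remaining bookkeeping consists in tracking that the $\alpha$-independent lower bounds $v_0$, $\inf_s \dot f(s)$, and $\inf_s u(s)$ on each subinterval suffice to absorb the $\alpha$-dependence of $\bar{s}(\alpha)$, which is where the strict inequalities of Hypothesis~\ref{cond_2} and condition $(ii)$ of Theorem~\ref{theorem:main} play their role.
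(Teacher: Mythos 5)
Your proposal follows essentially the same route as the paper: expand $\tilde{R}_{pq}$ into oscillatory terms with phases $\phi$, $\tilde{\phi}_{pq}^{c}$, $\tilde{\phi}_{pq}^{s}$, control the amplitude variation via Lemma~\ref{lemma:function_integral_1}, apply the first-order van der Corput estimate of \cite{AugierMCRF} to the phases with derivative bounded below by $v_0$ (and, after splitting at $\bar{s}(\alpha)$, on $[\bar{s}(\alpha),1]$ via Lemma~\ref{lemma:van_der_corput_1}), and use the second-order estimate with $\Phi''\gtrsim\epsilon_1\epsilon_2\min\dot f$ on $[0,\bar{s}(\alpha)]$ to get the $\mathcal{O}(1/\sqrt{\epsilon_1\epsilon_2})$ bound. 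The only cosmetic differences are that you work in the original time variable instead of rescaling $s=\epsilon_1\epsilon_2 t$, and your appeal to Lemma~\ref{lemma:van_der_corput_2} for monotonicity is unnecessary (the second-order estimate only needs the lower bound on $|\Phi''|$, and the paper uses that lemma only for $\tilde R_p$, $\tilde R_q$).
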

\begin{proof}
      Take $t\in[0,1/(\epsilon_1\epsilon_2)]$. By the change of variables $s=\epsilon_1\epsilon_2 \tau$, we have
    \begin{equation}
    \label{eq:int_function_1}
        \begin{aligned}
            &\int_0^{t}\sin(\theta_{\epsilon_1}(\epsilon_1\epsilon_2\tau))h_{pq}^2(\epsilon_1\epsilon_2\tau)e^{i\phi(\tau)}\text{d}\tau=\frac{1}{\epsilon_1\epsilon_2}\int_{0}^{\epsilon_1\epsilon_2\tau}\sin(\theta_{\epsilon_1}(s))h_{pq}^{2}(s)e^{i\phi(s/(\epsilon_1\epsilon_2))}\text{d}s.
        \end{aligned}
    \end{equation}
     By Lemma~\ref{lemma:function_integral_1}, there exist $\eta>0$ and $c_1>0$ such that for all $\epsilon_1,\epsilon_2\in(0,\eta)$ and $t\in[0,1/(\epsilon_1\epsilon_2)]$, we have
     $\int_{0}^{\epsilon_1\epsilon_2 t}\left|\frac{\text{d}}{\text{d}s}\big(\sin(\theta_{\epsilon_1}(s))h_{pq}^{2}(s)\big)\right|\text{d}s< c_1$. Notice that, by equation \eqref{eq:define_phi} and hypotheses of Theorem \ref{theorem:main}, we have $\frac{\text{d}}{\text{d}s}\phi\left(\frac{s}{\epsilon_1\epsilon_2}\right)=\frac{f(s)}{\epsilon_1\epsilon_2}\geq\frac{v_0}{\epsilon_1\epsilon_2}$. Then by equation~\eqref{eq:int_function_1} and Corollary A.7 in \cite{AugierMCRF}, we can deduce that there exists $c>0$ such that, for all $\epsilon_1,\epsilon_2\in(0,\eta)$ and $t\in[0,1/(\epsilon_1\epsilon_2)]$,
    \begin{equation*}
    \begin{aligned}
         \left|\frac{1}{\epsilon_1\epsilon_2}\right.&\left.\int_0^{\epsilon_1\epsilon_2 t}\sin(\theta_{\epsilon_1}(s))h_{pq}^2(s)e^{i\phi(s/(\epsilon_1\epsilon_2))}\text{d}\tau\right|\\
         \leq&\frac{c}{\epsilon_1\epsilon_2}\frac{\epsilon_1\epsilon_2}{v_0}\Bigg(|\sin(\theta_{\epsilon_1}(\epsilon_1\epsilon_2 t))h_{pq}^{2}(\epsilon_1\epsilon_2 t)|+\int_{0}^{\epsilon_1\epsilon_2 t}\left|\frac{\text{d}}{\text{d}s}\big(\sin(\theta_{\epsilon_1}(s))h_{pq}^{2}(s)\big)\right|\text{d}s\Bigg)<\frac{c}{v_0}(c_1+\|h_{pq}^{2}\|_{\infty}).
    \end{aligned}
    \end{equation*}
      Then by equation \eqref{eq:int_function_1} and the definition of $A_{jj}$ in equation \eqref{eq:def_A_j_k}, we obtain that, for all $j\in\{p,q\}$,
\begin{equation}
\label{eq:int_1}
    \int_{0}^{t}\sin(\theta_{\epsilon_1}(\epsilon_1\epsilon_2 \tau))h_{pq}^{2}(\epsilon_1\epsilon_2 \tau)A_{jj}(\phi(\tau))\text{d}\tau=\mathcal{O}(1)
\end{equation}
{  over $[0,1/(\epsilon_1\epsilon_2)].$}
Take $t\in[0,\bar{s}(\alpha)/(\epsilon_1\epsilon_2)]$. By the change of variables $s=\epsilon_1\epsilon_2 \tau$,
\begin{equation}
\begin{aligned}
\label{eq:int_function_2}
    &\int_0^{t}\cos^{2}\left(\frac{\theta_{\epsilon_1}(\epsilon_1\epsilon_2\tau)}{2}\right)h_{pq}^{2}(\epsilon_1\epsilon_2\tau)e^{i\tilde{\phi}_{pq}^{c}(\tau)}\text{d}\tau=\frac{1}{\epsilon_1\epsilon_2}\int_0^{\epsilon_1\epsilon_2 t}\cos^{2}\left(\frac{\theta_{\epsilon_1}(s)}{2}\right)h_{pq}^{2}(s)e^{i\tilde{\phi}_{pq}^{c}(s/(\epsilon_1\epsilon_2))}\text{d}s.
\end{aligned}
\end{equation}
Notice that $\frac{\text{d}^2}{\text{d}s^2}\tilde{\phi}_{pq}^c\left(\frac{s}{\epsilon_1\epsilon_2}\right)=\frac{1}{\epsilon_1\epsilon_2}(-2\dot{\lambda}_{\epsilon_1}(s)+\dot{f}(s))>\frac{1}{2}\dot{f}(s)$ by Lemma \ref{lemma:van_der_corput_1}. By the assumptions of Theorem \ref{theorem:main}, $\dot{f}(s)\ge \min(\dot{f})>0$ for all $s\in[0,\bar{s}(\alpha)]$. By applying Corollary A.6 in \cite{AugierMCRF} with $k=2$ and Lemma \ref{lemma:function_integral_1} to equation \eqref{eq:int_function_2}, we can prove that, { over $[0,\bar{s}(\alpha)/(\epsilon_1\epsilon_2)]$},
\begin{equation*}
    \int_0^{t}\cos^{2}\left(\frac{\theta_{\epsilon_1}(\epsilon_1\epsilon_2\tau)}{2}\right)h_{pq}^{2}(\epsilon_1\epsilon_2\tau)e^{i\tilde{\phi}_{pq}^{c}(\tau)}\text{d}\tau=\mathcal{O}\left(\frac{1}{\sqrt{\epsilon_1\epsilon_2}}\right).
\end{equation*}
{ Notice that, by Lemma \ref{lemma:van_der_corput_1}, there exists $\eta'>0$ such that, for all $\epsilon_1,\epsilon_2\in(0,\eta')$, we have $\frac{\text{d}}{\text{d}s}\tilde{\phi}_{pq}^{c}\left(\frac{s}{\epsilon_1\epsilon_2}\right)=\frac{1}{\epsilon_1\epsilon_2}(-2\lambda_{\epsilon_1}(s)+f(s))>\frac{v_0}{2\epsilon_1\epsilon_2}>0$.}
Then by a reasoning similar to that above, we can prove that, { over $[\bar{s}(\alpha)/(\epsilon_1\epsilon_2),1/(\epsilon_1\epsilon_2)]$}, 
    \begin{equation*}
          \int_{\frac{\bar{s}(\alpha)}{(\epsilon_1\epsilon_2)}}^{t}\cos^{2}\left(\frac{\theta_{\epsilon_1}(\epsilon_1\epsilon_2\tau)}{2}\right)h_{pq}^{2}(\epsilon_1\epsilon_2\tau)e^{i\tilde{\phi}_{pq}^{c}(\tau)}\text{d}\tau=\mathcal{O}\left(1\right).
    \end{equation*}
Then by the definition of $A_{pq}$ in equation \eqref{eq:def_A_j_k}, we have that, { over $[0,1/(\epsilon_1\epsilon_2)]$,}
\begin{equation}
\label{eq:int_2}
\begin{aligned}
     \int_0^{t}\cos^{2}\left(\frac{\theta_{\epsilon_1}(\epsilon_1\epsilon_2\tau)}{2}\right)h_{pq}^{2}(\epsilon_1\epsilon_2\tau)A_{pq}(\tilde{\phi}_{pq}^{c}(\tau))\text{d}\tau=\mathcal{O}\left(\frac{1}{\sqrt{\epsilon_1\epsilon_2}}\right).
\end{aligned}
\end{equation}
By definition of $\tilde{\phi}_{pq}^{s}$ in equation \eqref{eq:def_tilde_phi_1}, we deduce that $\frac{\text{d}}{\text{d}s}\tilde{\phi}_{pq}^{s}\left(\frac{s}{\epsilon_1\epsilon_2}\right)=\frac{1}{\epsilon_1\epsilon_2}(-2\lambda_{\epsilon_1}(s)-f(s))<-\frac{v_0}{\epsilon_1\epsilon_2}$. Similarly, we can obtain that, { over $t\in[0,1/(\epsilon_1\epsilon_2)]$},
\begin{equation}
\label{eq:int_3}
\begin{aligned}
     \int_0^{t}\cos^{2}\left(\frac{\theta_{\epsilon_1}(\epsilon_1\epsilon_2\tau)}{2}\right)h_{pq}^{2}(\epsilon_1\epsilon_2\tau)A_{pq}(\tilde{\phi}_{pq}^{s}(\tau))\text{d}\tau=\mathcal{O}\left(1\right).
\end{aligned}
\end{equation}
By equations \eqref{eq:int_1}, \eqref{eq:int_2}, and \eqref{eq:int_3}, we can conclude the proof.
\end{proof}
\begin{lemma}
\label{lemma:oscillatory_int_2}
    Over $t[0,1/(\epsilon_1\epsilon_2)]$,
    \begin{equation*}
        \begin{aligned}
            \int_0^t R(\tau)\text{d}\tau=\mathcal{O}\left(\frac{1}{\sqrt{\epsilon_1\epsilon_2}}\right),\quad\int_0^t \tilde{R}_{p}(\tau)\text{d}\tau=\mathcal{O}\left(\frac{1}{\sqrt{\epsilon_1\epsilon_2}}\right),\quad\int_0^t \tilde{R}_{q}(\tau)\text{d}\tau=\mathcal{O}\left(\frac{1}{\sqrt{\epsilon_1\epsilon_2}}\right).
        \end{aligned}
    \end{equation*}
\end{lemma}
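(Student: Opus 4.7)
The plan is to mimic the argument of Lemma~\ref{lemma:oscillatory_int_1}. After the change of variables $s=\epsilon_1\epsilon_2\tau$, every summand of $R$, $\tilde R_p$, $\tilde R_q$ has the form
\begin{equation*}
\frac{1}{\epsilon_1\epsilon_2}\int_0^{\epsilon_1\epsilon_2 t} a_{\epsilon_1}(s)\, e^{i\Psi(s/(\epsilon_1\epsilon_2))}\,\text{d}s,
\end{equation*}
where $a_{\epsilon_1}$ is either $h_{jk}^{2}$ (for $R$) or a product of some $h_{jk}^{2}$ with $\cos(\theta_{\epsilon_1}/2)$ or $\sin(\theta_{\epsilon_1}/2)$ (for $\tilde R_p,\tilde R_q$), and $\Psi$ is one of the phases defined in \eqref{eq:def_phi_R_p} or \eqref{eq:def_phi_R_q}, or $\phi_{jk}^{2}$. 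To each such integral I will apply the second-order Van der Corput estimate (Corollary~A.6 of \cite{AugierMCRF} with $k=2$): if the rescaled phase $s\mapsto\Psi(s/(\epsilon_1\epsilon_2))$ has a second derivative whose absolute value is uniformly bounded below by some $M>0$ and of constant sign, and if $a_{\epsilon_1}$ has uniformly bounded total variation in $\epsilon_1$, then the inner integral is $\mathcal{O}(M^{-1/2})$. I will verify $M\gtrsim 1/(\epsilon_1\epsilon_2)$ in every case, so the inner integral is $\mathcal{O}(\sqrt{\epsilon_1\epsilon_2})$ and, after restoring the $1/(\epsilon_1\epsilon_2)$ prefactor, each summand is $\mathcal{O}(1/\sqrt{\epsilon_1\epsilon_2})$. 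The required BV bound on the amplitudes is immediate for $R$ and, for $\tilde R_p,\tilde R_q$, is provided by Lemma~\ref{lemma:function_integral_1} applied with $g\in\{\sin(\cdot/2),\cos(\cdot/2)\}$ and $h=h_{jk}^{2}$.

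The case analysis for the second-derivative lower bounds proceeds as follows. For the terms in $R(t)$, the phase $\phi_{jk}^{2}(t)=(\lambda_j-\lambda_k)t+2\phi(t)$ satisfies $\frac{\text{d}^{2}}{\text{d}s^{2}}\Psi(s/(\epsilon_1\epsilon_2))=2\dot f(s)/(\epsilon_1\epsilon_2)$, which is uniformly bounded below by $2\min\dot f/(\epsilon_1\epsilon_2)>0$ since $\dot f>0$ by the assumptions of Theorem~\ref{theorem:main}. For the terms in $\tilde R_p$ and $\tilde R_q$, inspection of \eqref{eq:def_phi_R_p} and \eqref{eq:def_phi_R_q} shows that every phase has the generic form $\pm\lambda_j t\pm\tilde\phi(t)+c\,\phi(t)$ with $c\in\{\pm 3/2,\pm 5/2\}$, so the rescaled second derivative is $(\pm\dot\lambda_{\epsilon_1}(s)+c\,\dot f(s))/(\epsilon_1\epsilon_2)$. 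By Lemma~\ref{lemma:van_der_corput_2}, $|\dot\lambda_{\epsilon_1}(s)|<\dot f(s)$, hence
\begin{equation*}
|\pm\dot\lambda_{\epsilon_1}(s)+c\,\dot f(s)|\geq (|c|-1)\dot f(s)\geq \tfrac{1}{2}\min\dot f>0,
\end{equation*}
and the sign of this expression is constant in $s$ (it equals the sign of $c$). Corollary~A.6 then yields the announced bound for each summand.

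The main obstacle is purely bookkeeping: each summand of $R$, $\tilde R_p$, $\tilde R_q$ decomposes into two complex-conjugate oscillatory integrals through the definition of $A_{jk}$ in \eqref{eq:def_A_j_k}, and one must verify the second-derivative estimate for each of them. The uniform lower bound $|c|\geq 3/2$ valid for every phase in $\tilde R_p,\tilde R_q$, combined with Lemma~\ref{lemma:van_der_corput_2}, makes this verification uniform across all cases, so the argument is routine and no new ideas beyond those of Lemma~\ref{lemma:oscillatory_int_1} are required.
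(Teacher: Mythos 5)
Your proposal is correct and follows essentially the same route as the paper: after rescaling $s=\epsilon_1\epsilon_2\tau$, both apply the second-order Van der Corput estimate (Corollary A.6 of \cite{AugierMCRF} with $k=2$) using the lower bound $\frac{2}{\epsilon_1\epsilon_2}\dot f$ for the phases in $R$ and, via Lemma~\ref{lemma:van_der_corput_2}, bounds of the form $\frac{1}{\epsilon_1\epsilon_2}|\pm\dot\lambda_{\epsilon_1}+c\dot f|\geq\frac{1}{2\epsilon_1\epsilon_2}\min\dot f$ for the phases in $\tilde R_p,\tilde R_q$, together with Lemma~\ref{lemma:function_integral_1} for the amplitude variation. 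Your uniform treatment through the coefficient $c$ with $|c|\in\{3/2,5/2\}$ is just a compact version of the paper's case-by-case verification.
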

\begin{proof}
    Notice that $\frac{\text{d}^2}{\text{d}t^2}\phi_{jk}^{2}\left(\frac{s}{\epsilon_1\epsilon_2}\right)=\frac{2}{\epsilon_1\epsilon_2}\dot{f}(s)>\frac{2}{\epsilon_1\epsilon_2}\min\dot{f}>0$ for all $s\in[0,1]$ and $(j,k)\in\mathcal{K}$. Then by the same reasoning as that in the proof of Lemma \ref{lemma:oscillatory_int_1}, we obtain that, 
    $ \int_{0}^{t}R(\tau)\text{d}\tau=\mathcal{O}(1/\sqrt{\epsilon_1\epsilon_2})$ { over $[0,1/(\epsilon_1\epsilon_2)]$}.

    Take $j$ such that $1\leq j<p$. By equation \eqref{eq:def_phi_R_p} and Lemma \ref{lemma:van_der_corput_2}, { there exists $\eta>0$ such that, for all $\epsilon_1,\epsilon_2\in(0,\eta)$ and $s\in[0,1]$,}
    \begin{equation*}
    \begin{aligned}
        \left|\frac{\text{d}^2}{\text{d}s^2}\tilde{\phi}_{jp}^{c}(s/(\epsilon_1\epsilon_2))\right|=&\frac{1}{\epsilon_1\epsilon_2}|\dot{\lambda}_{\epsilon_1}(s)+5\dot{f}(s)/2|>\frac{3}{2\epsilon_1\epsilon_2}\min \dot{f}>0,\\
        \left|\frac{\text{d}^2}{\text{d}s^2}\tilde{\phi}_{jp}^{s}(s/(\epsilon_1\epsilon_2))\right|=&\frac{1}{\epsilon_1\epsilon_2}|\dot{\lambda}_{\epsilon_1}(s)+3\dot{f}(s)/2|>\frac{1}{2\epsilon_1\epsilon_2}\min \dot{f}>0.
    \end{aligned}
    \end{equation*}
    Similarly, for $j$ such that $p<j\leq n$, {  $\epsilon_1,\epsilon_2\in(0,\eta)$ and $s\in[0,1]$,}
     \begin{equation*}
    \begin{aligned}
        \left|\frac{\text{d}^2}{\text{d}s^2}\tilde{\phi}_{pj}^{c}(s/(\epsilon_1\epsilon_2))\right|>\frac{1}{2\epsilon_1\epsilon_2}\min \dot{f}>0,\\
        \left|\frac{\text{d}^2}{\text{d}s^2}\tilde{\phi}_{pj}^{s}(s/(\epsilon_1\epsilon_2))\right|>\frac{1}{2\epsilon_1\epsilon_2}\min \dot{f}>0.
    \end{aligned}
    \end{equation*}
    By the same reasoning as in the proof of Lemma \ref{lemma:oscillatory_int_1}, we can prove that, $ \int_{0}^{t}\tilde{R}_{p}(\tau)\text{d}\tau=\mathcal{O}\left(\frac{1}{\sqrt{\epsilon_1\epsilon_2}}\right)$ { over  $[0,1/(\epsilon_1\epsilon_2)]$}.
   Similarly, by equation \eqref{eq:def_phi_R_q} and Lemma~\ref{lemma:van_der_corput_2}, we can obtain that, $  \int_{0}^{t}\tilde{R}_{q}(\tau)\text{d}\tau=\mathcal{O}\left(\frac{1}{\sqrt{\epsilon_1\epsilon_2}}\right)$ { over  $[0,1/(\epsilon_1\epsilon_2)]$}.
\end{proof}
Let us introduce a fifth change of variables
\begin{equation}
\label{eq:fifth_cv}
\hat{\psi}_4(t)=\exp(i\epsilon_1^2X_5(t))\hat{\psi}_5(t),
\end{equation}
where 
\begin{equation}
\label{eq:def_X_5}
    X_5(t)=-\int_{0}^{t}(R(\tau)+\tilde{R}_{pq}(\tau)+\tilde{R}_{p}(\tau)+\tilde{R}_{q}(\tau))\text{d}\tau.
\end{equation}
By Lemmas \ref{lemma:oscillatory_int_1} and \ref{lemma:oscillatory_int_2}, { over  $[0,1/(\epsilon_1\epsilon_2)]$},
\begin{equation}
\label{eq:order_X_5}
X_5(t)=\mathcal{O}\left(\frac{1}{\sqrt{\epsilon_1\epsilon_2}}\right).
\end{equation}
Notice that, by Lemma \ref{lemma:theta_derivative_order} and equation \eqref{eq:def_hat_H_4}, $\hat{H}_{4}(t)=\mathcal{O}(\epsilon_2+\epsilon_1^2)$ on $[0,1/(\epsilon_1\epsilon_2)]$. Then by Proposition \ref{prop:change_of_var}, the dynamics of $\hat{\psi}_{5}$ are characterized by the Hamiltonian
\begin{equation*}
\begin{aligned}
     \hat{H}_{5}(t)=&\hat{H}_{4}(t)+\epsilon_1^2\frac{\text{d}}{\text{d}t}X_{5}(t)-\left[i\epsilon_1^2X_5(t),\hat{H}_{4}(t)+\frac{\epsilon_1^2}{2}\frac{\text{d}}{\text{d}t}X_5(t)\right]+\mathcal{O}(\epsilon_{1}^3)\\
     =&-\frac{\epsilon_1\epsilon_2}{2}\dot{\theta}_{\epsilon_1}(\epsilon_1\epsilon_2 t)B_{pq}(-2\tilde{\phi}(t))+\epsilon_1^2 U_4(t)\Big(\sum_{j=1}^{n}h_{jj}^{0}(\epsilon_1\epsilon_2 t)\textbf{e}_{jj}\Big)U_{4}^{\dagger}(t)\\
     &-\left[i\epsilon_1^2X_5(t),-\frac{\epsilon_1\epsilon_2}{2}\dot{\theta}_{\epsilon_1}(\epsilon_1\epsilon_2 t)B_{pq}(-2\tilde{\phi}(t))\right]+\mathcal{O}(\epsilon_{1}^3+\epsilon_{1}^2\epsilon_2+\epsilon_{1}^{7/2}\epsilon_2^{-1/2}).
\end{aligned}
\end{equation*}
\subsection*{Step 5: Rotating wave approximation}
Let us introduce the truncation of $\hat{H}_{5}(t)$ given by
\begin{equation*}
    \begin{aligned}
     \hat{H}_{\text{RWA}}(t)
     =-\frac{\epsilon_1\epsilon_2}{2}\dot{\theta}_{\epsilon_1}(\epsilon_1\epsilon_2 t)B_{pq}(-2\tilde{\phi}(t))+\epsilon_1^2 U_4(t)\Big(\sum_{j=1}^{n}h_{jj}^{0}(\epsilon_1\epsilon_2 t)\textbf{e}_{jj}\Big)U_{4}^{\dagger}(t).
    \end{aligned}
\end{equation*}
We denote by $\hat{\psi}_{\text{RWA}}(t)$ the solution of 
\begin{equation}
\label{eq:dynamics_RWA}
    i\frac{\text{d}}{\text{d}t}\hat{\psi}_{\text{RWA}}(t)=\hat{H}_{\text{RWA}}(t)\hat{\psi}_{\text{RWA}}(t),\quad\hat{\psi}_{\text{RWA}}(0)=\hat{\psi}_{5}(0).
\end{equation}
\begin{lemma}
    \label{lemma:RWA_1}
    $\left\|\hat{\psi}_{\text{RWA}}\left(t\right)-\hat{\psi}_{5}\left(t\right)\right\|=\mathcal{O}(\epsilon_1^{3/2}\epsilon_2^{-1/2}+\epsilon_1+\epsilon_1^{5/2}\epsilon_{2}^{-3/2})$ over $[0,1/(\epsilon_1\epsilon_2)]$.
\end{lemma}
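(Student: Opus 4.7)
The plan is to compare the two Schrödinger dynamics by Duhamel's principle. Since $\hat\psi_{\text{RWA}}$ and $\hat\psi_5$ start from the same initial condition and each evolve with a unitary propagator, one has the standard bound
\begin{equation*}
\bigl\|\hat\psi_{\text{RWA}}(t)-\hat\psi_5(t)\bigr\|\le\int_0^t\bigl\|\hat H_5(\tau)-\hat H_{\text{RWA}}(\tau)\bigr\|\,\mathrm d\tau,
\end{equation*}
so the whole task reduces to estimating the time integral of the operator norm of $\hat H_5-\hat H_{\text{RWA}}$ on $[0,1/(\epsilon_1\epsilon_2)]$. From the expression for $\hat H_5$ obtained at the end of Step 4, this difference splits into two pieces: the commutator $\bigl[i\epsilon_1^2X_5(t),\frac{\epsilon_1\epsilon_2}{2}\dot\theta_{\epsilon_1}\,B_{pq}(-2\tilde\phi(t))\bigr]$ and an uncontrolled remainder of order $\mathcal O(\epsilon_1^3+\epsilon_1^2\epsilon_2+\epsilon_1^{7/2}\epsilon_2^{-1/2})$.

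The main step is the commutator. A crude bound would use $\|\dot\theta_{\epsilon_1}\|_\infty=\mathcal O(1/\epsilon_1)$ from Lemma~\ref{lemma:theta_derivative_order}, giving a pointwise estimate of the commutator of order $\epsilon_1^{3/2}\epsilon_2^{1/2}$ and, after integrating on an interval of length $1/(\epsilon_1\epsilon_2)$, only $\mathcal O(\epsilon_1^{1/2}\epsilon_2^{-1/2})$, which is too weak. The key refinement is to treat $\dot\theta_{\epsilon_1}$ in $L^1$ rather than in $L^\infty$: I would pull out $\|X_5\|_\infty=\mathcal O((\epsilon_1\epsilon_2)^{-1/2})$ from Equation~\eqref{eq:order_X_5} and $\|B_{pq}\|=\mathcal O(1)$, and then perform the change of variables $s=\epsilon_1\epsilon_2\tau$ to obtain
\begin{equation*}
\int_0^{1/(\epsilon_1\epsilon_2)}\epsilon_1\epsilon_2\,|\dot\theta_{\epsilon_1}(\epsilon_1\epsilon_2\tau)|\,\mathrm d\tau=\int_0^1|\dot\theta_{\epsilon_1}(s)|\,\mathrm ds=\mathcal O(1)
\end{equation*}
thanks to Lemma~\ref{lemma:int_theta}. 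Multiplying everything out produces a contribution of order $\epsilon_1^2\cdot(\epsilon_1\epsilon_2)^{-1/2}=\epsilon_1^{3/2}\epsilon_2^{-1/2}$, which is exactly the leading term in the target estimate.

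For the residual $\mathcal O(\epsilon_1^3+\epsilon_1^2\epsilon_2+\epsilon_1^{7/2}\epsilon_2^{-1/2})$, a direct pointwise integration over $[0,1/(\epsilon_1\epsilon_2)]$ yields $\mathcal O(\epsilon_1^2\epsilon_2^{-1}+\epsilon_1+\epsilon_1^{5/2}\epsilon_2^{-3/2})$. The two terms $\epsilon_1$ and $\epsilon_1^{5/2}\epsilon_2^{-3/2}$ already appear in the statement; the remaining term $\epsilon_1^2\epsilon_2^{-1}$ is absorbed by the maximum of the other two terms, since $\epsilon_1^2\epsilon_2^{-1}=(\epsilon_1^{3/2}\epsilon_2^{-1/2})^{1/2}(\epsilon_1^{5/2}\epsilon_2^{-3/2})^{1/2}$ is a geometric mean and is always dominated by $\max(\epsilon_1^{3/2}\epsilon_2^{-1/2},\epsilon_1^{5/2}\epsilon_2^{-3/2})$. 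The only nontrivial point is the commutator bound described above; the rest is bookkeeping with the sup norms already established, together with the Duhamel inequality.
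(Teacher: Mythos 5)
Your proposal is correct and follows essentially the same route as the paper: a Duhamel-type comparison of the two unitary evolutions (the paper phrases it by differentiating $W^{\dagger}(t)V(t)$ for the two propagators), with the commutator term estimated by extracting $\|X_5\|_{\infty}=\mathcal{O}((\epsilon_1\epsilon_2)^{-1/2})$ and converting $\dot{\theta}_{\epsilon_1}$ to an $L^1$ bound via the change of variables $s=\epsilon_1\epsilon_2\tau$ and Lemma~\ref{lemma:int_theta}, and the term $\epsilon_1^{2}\epsilon_2^{-1}$ absorbed exactly as in the paper through $\epsilon_1^{3/2}\epsilon_2^{-1/2}+\epsilon_1^{5/2}\epsilon_2^{-3/2}\geq 2\epsilon_1^{2}\epsilon_2^{-1}$.
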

\begin{proof}
    Introduce the $U(n)$-valued functions $V$ and $W$ solutions of 
    $i\dot V(t)=\hat{H}_{5}(t)V(t)$,
    $i\dot W(t)=\hat{H}_{\text{RWA}}(t)W(t)$ with initial conditions 
    $V(0)=\mathbb{I}_{n}$, $W(0)=\mathbb{I}_{n}$.
It is evident that $\hat{\psi}_{5}(t)=V(t)\hat{\psi}_{5}(0)$ and $\hat{\psi}_{\text{RWA}}(t)=W(t)\hat{\psi}_{5}(0)$ for every $t\in\left[0,1/(\epsilon_1\epsilon_2)\right]$. By differentiating $W^{\dagger}(t)V(t)$ with respect to $t$, we  obtain that
\begin{equation*}
    \frac{\text{d}}{\text{d}t}\left(W^{\dagger}(t)V(t)\right)=-iW^{\dagger}(t)\left(\hat{H}_{5}(t)-\hat{H}_{\text{RWA}}(t)\right)V(t).
\end{equation*}
Then { over  $[0,1/(\epsilon_1\epsilon_2)]$}, we have that
\begin{equation*}
\begin{aligned}
    \left\|W^{\dagger}(t)V(t)\right.\left.-\mathbb{I}_n\right\|\leq&\int_{0}^{t}\left\|\hat{H}_{5}(\tau)-\hat{H}_{\text{RWA}}(\tau)\right\|\text{d}\tau\\
\leq&\epsilon_1^2\int_{0}^{t}\left\|X_5(\tau)\right\|\left|\frac{\epsilon_1\epsilon_2}{2}\dot{\theta}_{\epsilon_1}(\epsilon_1\epsilon_2\tau)\right|\text{d}\tau+\mathcal{O}(\epsilon_1^{2}\epsilon_2^{-1}+\epsilon_1+\epsilon_1^{5/2}\epsilon_2^{-3/2})\\
    \leq&\epsilon_1^2\left\|X_5 \right\|_{\infty}\int_{0}^{\epsilon_1\epsilon_2 t}\frac{1}{2}|\dot{\theta}_{\epsilon_1}(s)|\text{d}s+\mathcal{O}(\epsilon_1^{2}\epsilon_2^{-1}+\epsilon_1+\epsilon_1^{5/2}\epsilon_2^{-3/2}).
\end{aligned}
\end{equation*}
By Lemma \ref{lemma:int_theta} and equation \eqref{eq:order_X_5}, there exist $\eta>0$ and $M>0$ such that, for $\epsilon_1\in(0,\eta)$, we have that, { over  $[0,1/(\epsilon_1\epsilon_2)]$},
\begin{equation*}
\begin{aligned}
 \left\|W^{\dagger}(t)V(t)\right.-\left.\mathbb{I}_n\right\|&<\frac{M\epsilon_1^2}{2}\|X_5\|_{\infty}+\mathcal{O}(\epsilon_1^{2}\epsilon_2^{-1}+\epsilon_1+\epsilon_1^{5/2}\epsilon_2^{-3/2})\\&=\mathcal{O}(\epsilon_1^{3/2}\epsilon_2^{-1/2}+\epsilon_1^{2}\epsilon_2^{-1}+\epsilon_1+\epsilon_1^{5/2}\epsilon_2^{-3/2}).
\end{aligned}
\end{equation*}
Notice that $\epsilon_{1}^{3/2}\epsilon_{2}^{-1/2}+\epsilon_{1}^{5/2}\epsilon_{2}^{-3/2}\geq2\epsilon_{1}^{2}\epsilon_2^{-1}$ for all $\epsilon_1,\epsilon_2>0$.
Then we can deduce that, { over  $[0,1/(\epsilon_1\epsilon_2)]$},
\begin{equation*}
  \left\|W^{\dagger}(t)V(t)-\mathbb{I}_n\right\|=\mathcal{O}(\epsilon_{1}^{3/2}\epsilon_2^{-1/2}+\epsilon_1+\epsilon_1^{5/2}\epsilon_2^{-3/2})
\end{equation*}
and 
\begin{equation*}
\begin{aligned}
    \left\|\hat{\psi}_{\text{RWA}}(t)-\hat{\psi}_{5}(t)\right\|&=\left\|W(t)\hat{\psi}_{5}(0)-V(t)\hat{\psi}_{5}(0)\right\|=\left\|W(t)(\mathbb{I}_n-W^{\dagger}(t)V(t))\hat{\psi}_{5}(0)\right\|\\
    &=\left\|(\mathbb{I}_n-W^{\dagger}(t)V(t))\hat{\psi}_{5}(0)\right\|=\mathcal{O}(\epsilon_{1}^{3/2}\epsilon_2^{-1/2}+\epsilon_1+\epsilon_1^{5/2}\epsilon_2^{-3/2}).
\end{aligned}
\end{equation*}
\end{proof}
For $t\in[0,1/(\epsilon_1\epsilon_2)$], define the unitary transformation
\begin{equation}
\label{eq:def_U_back}
    \hat{U}_{\text{back}}(t)=U_{3}^{\dagger}(t)U_4^{\dagger}(t),
\end{equation}
where $U_3$ and $U_4$ are given as in changes of variables \eqref{eq:third_cv} and \eqref{eq:fourth_cv}. Then, let us introduce the following change of variables that pushes the truncated system backward to the original frame:
\begin{equation*}
    \hat{\psi}_{\text{rwa}}(t)=\hat{U}_{\text{back}}(t)\hat{\psi}_{\text{RWA}}(t).
\end{equation*}
The initial state of $\hat{\psi}_{\text{rwa}}(0)$ is $U_3^{\dagger}(0)U_4^{\dagger}(0)\hat{\psi}_{5}(0)$ and its dynamics are characterized by the Hamiltonian
\begin{equation}
\begin{aligned}
\label{eq:H_rwa}
    \hat{H}_{\text{rwa}}(t)=\epsilon_1\delta_{pq}u(\epsilon_1\epsilon_2t)A_{pq}(\phi_{pq}^{1}(t))+\epsilon_1^2\sum_{j=1}^{n}h_{jj}^{0}(\epsilon_1\epsilon_2t)\textbf{e}_{jj}.
\end{aligned}
\end{equation}
Recall that $\phi_{pq}^{1}(t)=(\lambda_p-\lambda_q)t+\phi(t)$ for $t\in[0,1/(\epsilon_1\epsilon_2)]$.
\begin{lemma}
\label{lemma:RWA_2}
    For $\epsilon_1,\epsilon_2>0$, we have
    \begin{equation*}
    \begin{aligned}
        \hat{\psi}_{\text{rwa}}(0)=&\psi_{I}(0),\\ \hat{\psi}_{\text{rwa}}\left(\frac{1}{\epsilon_1\epsilon_2}\right)=&\psi_{I}\left(\frac{1}{\epsilon_1\epsilon_2}\right)+\mathcal{O}(\epsilon_1^{3/2}\epsilon_2^{-1/2}+\epsilon_1+\epsilon_1^{5/2}\epsilon_{2}^{-3/2}).
    \end{aligned}
    \end{equation*}
\end{lemma}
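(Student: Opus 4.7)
The plan is to unwind the chain of unitary changes of variables that led from $\psi_I$ to $\hat\psi_5$, compare the resulting expression with the backward unwinding \eqref{eq:def_U_back} defining $\hat\psi_{\text{rwa}}$, and then invoke Lemma \ref{lemma:RWA_1} together with the estimate \eqref{eq:order_X_5}. Composing Steps 1--4 yields
\begin{equation*}
\psi_I(t)=\exp(i\epsilon_1 X_1(t))\exp(i\epsilon_1^2 X_2(t))\,U_3^\dagger(t)U_4^\dagger(t)\exp(i\epsilon_1^2 X_5(t))\,\hat\psi_5(t),
\end{equation*}
while by construction $\hat\psi_{\text{rwa}}(t)=U_3^\dagger(t)U_4^\dagger(t)\hat\psi_{\text{RWA}}(t)$. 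So I only need to control the discrepancy between the two strings acting on the reference state at the two endpoints.

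At $t=0$ I would simply note that $X_5(0)=0$ is immediate from \eqref{eq:def_X_5}, that Remark \ref{remark:final_state} gives $X_1(0)=X_2(0)=0$, and that the relations $\theta_{\epsilon_1}(0)=0$ and $\tilde\phi(0)=0$ reduce \eqref{eq:third_cv} and \eqref{eq:fourth_cv} to $U_3(0)=U_4(0)=\mathbb{I}_n$. Since moreover $\hat\psi_{\text{RWA}}(0)=\hat\psi_5(0)$ by the initial condition in \eqref{eq:dynamics_RWA}, these observations chain together to give $\hat\psi_{\text{rwa}}(0)=\psi_I(0)$ directly.

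For the final time $T^\star:=1/(\epsilon_1\epsilon_2)$, I would again apply Remark \ref{remark:final_state} to kill the $X_1$ and $X_2$ exponentials, leaving
\begin{equation*}
\psi_I(T^\star)=U_3^\dagger(T^\star)U_4^\dagger(T^\star)\exp(i\epsilon_1^2 X_5(T^\star))\,\hat\psi_5(T^\star),
\end{equation*}
and then exploit unitarity of $U_3^\dagger U_4^\dagger$ together with the triangle inequality to conclude
\begin{equation*}
\|\hat\psi_{\text{rwa}}(T^\star)-\psi_I(T^\star)\|\le\|\hat\psi_{\text{RWA}}(T^\star)-\hat\psi_5(T^\star)\|+\|\exp(i\epsilon_1^2 X_5(T^\star))-\mathbb{I}_n\|.
\end{equation*}
Lemma \ref{lemma:RWA_1} handles the first term with the desired order $\mathcal{O}(\epsilon_1^{3/2}\epsilon_2^{-1/2}+\epsilon_1+\epsilon_1^{5/2}\epsilon_2^{-3/2})$. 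For the second, I would use the elementary estimate $\|\exp(iA)-\mathbb{I}_n\|\le\|A\|$ for Hermitian $A$ combined with \eqref{eq:order_X_5}, which gives $\epsilon_1^2\cdot\mathcal{O}((\epsilon_1\epsilon_2)^{-1/2})=\mathcal{O}(\epsilon_1^{3/2}\epsilon_2^{-1/2})$, a bound already absorbed into the statement. The whole argument is essentially bookkeeping: there is no real obstacle, since the endpoint vanishing in Remark \ref{remark:final_state} (coming from $u(0)=u(1)=0$) and the oscillatory-integral estimate \eqref{eq:order_X_5} on $X_5$ do all the work.
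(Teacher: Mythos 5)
Your proposal is correct and follows essentially the same route as the paper: both use the endpoint vanishing of $X_1,X_2$ (Remark \ref{remark:final_state}), unitarity of $U_3^\dagger U_4^\dagger$ with the triangle inequality, Lemma \ref{lemma:RWA_1} for $\|\hat\psi_{\mathrm{RWA}}-\hat\psi_5\|$, and the bound \eqref{eq:order_X_5} on $X_5$ (your $\|\exp(i\epsilon_1^2X_5)-\mathbb{I}_n\|\le\epsilon_1^2\|X_5\|$ step is exactly how the paper controls $\|\hat\psi_4-\hat\psi_5\|$). The only cosmetic difference is that you unwind the full chain back to $\psi_I$, while the paper compares $\hat\psi_2=\hat U_{\text{back}}\hat\psi_4$ with $\hat\psi_{\text{rwa}}=\hat U_{\text{back}}\hat\psi_{\mathrm{RWA}}$ and then invokes $\psi_I=\hat\psi_2$ at the endpoints.
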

\begin{proof}
    By Remark \ref{remark:final_state}, we have 
    \begin{equation}
        \label{eq:RWA_2}
        \psi_{I}(0)=\hat{\psi}_{2}(0),\quad\psi_{I}(1/(\epsilon_1\epsilon_2))=\hat{\psi}_{2}(1/(\epsilon_1\epsilon_2)).
    \end{equation}
    On the other hand, for all $t\in[0,1/(\epsilon_1\epsilon_2)]$, we have
    \begin{equation*}
    \begin{aligned}
        \|\hat{\psi}_{2}(t)-\hat{\psi}_{\text{rwa}}(t)\|&=\|\hat{\psi}_{2}(t)-\hat{U}_{\text{back}}(t)\hat{\psi}_{\text{RWA}}(t)\|\\
        &\leq\|\hat{\psi}_{2}(t)-\hat{U}_{\text{back}}(t)\hat{\psi}_4(t)\|+\|\hat{U}_{\text{back}}(t)(\hat{\psi}_{4}(t)-\hat{\psi}_5(t))\|\\&+\|\hat{U}_{\text{back}}(t)(\hat{\psi}_5(t)-\hat{\psi}_{\text{RWA}}(t))\|.\\
    \end{aligned}
    \end{equation*}
    Since, by definition \eqref{eq:def_U_back}, $\hat{U}_{\text{back}}(t)$ is unitary and $\hat{\psi}_{2}(t)=\hat{U}_{\text{back}}(t)\hat{\psi}_{4}(t)$ for all $t\in[0,1/(\epsilon_1\epsilon_2)]$, we deduce that
    \begin{equation}
    \label{eq:RWA_3}
    \begin{aligned}
         \|\hat{\psi}_{2}(t)-\hat{\psi}_{\text{rwa}}(t)\|\leq\|\hat{\psi}_{4}(t)-\hat{\psi}_5(t)\|+\|\hat{\psi}_5(t)-\hat{\psi}_{\text{RWA}}(t)\|.
    \end{aligned}
    \end{equation}
By equations \eqref{eq:fifth_cv}, \eqref{eq:def_X_5}, and \eqref{eq:dynamics_RWA}, we have that $\hat{\psi}_{4}(0)=\hat{\psi}_5(0)$ and $\hat{\psi}_{\text{RWA}}(0)=\hat{\psi}_{5}(0)$. Then 
$\psi_{I}(0)=\hat{\psi}_{2}(0)=\hat{\psi}_{5}(0)$. By equations \eqref{eq:fifth_cv}, \eqref{eq:order_X_5}, and Lemma~\ref{lemma:RWA_1}, we have $\|\hat{\psi}_{4}(1/(\epsilon_1\epsilon_2))-\hat{\psi}_{5}(1/(\epsilon_1\epsilon_2))\|=\mathcal{O}(\epsilon_1^{3/2}\epsilon_2^{-1/2})$ and $\|\hat{\psi}_{5}(1/(\epsilon_1\epsilon_2))-\hat{\psi}_{\text{RWA}}(1/(\epsilon_1\epsilon_2))\|=\mathcal{O}(\epsilon_1^{3/2}\epsilon_2^{-1/2}+\epsilon_1+\epsilon_1^{5/2}\epsilon_{2}^{-3/2})$. 
The conclusion follows from
equations~\eqref{eq:RWA_2} and \eqref{eq:RWA_3}.
\end{proof}
\subsection*{Step 6: Non-standard adiabatic approximation}
For a system characterized by the Hamiltonian $\hat{H}_{\text{rwa}}$ given in equation \eqref{eq:H_rwa}, it is evident that the dynamics in the two-dimensional space $\textbf{span}\big(\textbf{e}_{p},\textbf{e}_{q}\big)$ { are} decoupled from the rest of the system. Let us define the decoupled Hamiltonian. Let us introduce the decoupled Hamiltonian
\begin{equation*}
\begin{aligned}
   \hat{H}^{d}_{\text{rwa}}(t)=\epsilon_1\delta_{pq}u(\epsilon_1\epsilon_2t)A_{pq}(\phi(t))+\epsilon_1^2(h_{pp}^{0}(\epsilon_1\epsilon_2t)\textbf{e}_{pp}+h_{qq}^{0}(\epsilon_1\epsilon_2t)\textbf{e}_{qq})
\end{aligned}
\end{equation*}
By assumptions of Theorem \ref{theorem:main} and the change of variables in equation \eqref{eq:interaction_frame}, 
$\psi_{I}(0)=\textbf{e}_{p}$ is in the two-dimensional space $\textbf{span}\big(\textbf{e}_{p},\textbf{e}_{q}\big)$. By Lemma~\ref{lemma:RWA_2}, $\hat{\psi}_{\text{rwa}}(0)=\psi_{I}(0)$ and $\hat{\psi}_{\text{rwa}}(0)\in\textbf{span}\big(\textbf{e}_{p},\textbf{e}_{q}\big)$. Then the solution $\psi_{\text{rwa}}^{d}$ of
\begin{equation*}
    i\frac{\text{d}}{\text{d}t}\hat{\psi}_{\text{rwa}}^{d}(t)=\hat{H}_{\text{rwa}}^{d}(t)\hat{\psi}_{\text{rwa}}^{d}(t),\quad\hat{\psi}_{\text{rwa}}^{d}(0)=\psi_{I}(0),
\end{equation*}
satisfies 
\begin{equation}
    \label{eq:decoupled_dynamics}
    \hat{\psi}_{\text{rwa}}^{d}(t)=\hat{\psi}_{\text{rwa}}(t),\quad\forall t\in\left[0,\frac{1}{\epsilon_1\epsilon_2}\right].
\end{equation}
 We can then apply Lemma 30 in \cite{ROBIN2022414} to the decoupled 2-level system and obtain that, for the control $\omega_{\epsilon_1,\epsilon_2}$ given in Theorem \ref{theorem:main}, {there exist $C'>0$ and $\eta'>0$ such that, for  $(\epsilon_1,\epsilon_2)\in(0,\eta')^2$ and $\hat{\psi}^{d}_{\text{rwa}}(0)=\psi_{I}(0)= \textbf{e}_{p}$,
\begin{equation*}
    \min_{\theta\in[0,2\pi]}\left\|\hat{\psi}_{\text{rwa}}^{d}\left(\frac{1}{\epsilon_1\epsilon_2}\right)-\exp(i\theta)\textbf{e}_q\right\|\leq C'\frac{\epsilon_{2}}{\epsilon_1}.
\end{equation*}}
\subsection*{Step 7: Combination of rotating wave and adiabatic approximations}
By equation~\eqref{eq:decoupled_dynamics} and Lemma~\ref{lemma:RWA_2}, {there exist $C>0$ and $\eta>0$ such that, for $(\epsilon_1,\epsilon_2)\in(0,\eta)^2$ and $\psi_{I}(0)=\textbf{e}_{p}$,}
\begin{equation*}
\begin{aligned}
\min_{\theta\in[0,2\pi]}\left\|\psi_{I}\left(\frac{1}{\epsilon_1\epsilon_2}\right)-\exp(i\theta)\textbf{e}_q\right\|\leq C(\epsilon_2\epsilon_1^{-1}+\epsilon_{1}^{3/2}\epsilon_2^{-1/2}+\epsilon_1+\epsilon_1^{5/2}\epsilon_2^{-3/2}).
\end{aligned}
\end{equation*}
By the change of variables~\eqref{eq:interaction_frame}, it follows that {there exist $C>0$ and $\eta>0$ such that, for $(\epsilon_1,\epsilon_2)\in(0,\eta)^2$ and $\psi_{\epsilon_1,\epsilon_2}(0)=\textbf{e}_{p}$,}
\begin{equation*}
\begin{aligned}    \min_{\theta\in[0,2\pi]}\left\|\psi_{\epsilon_{1},\epsilon_{2}}\left(\frac{1}{\epsilon_{1}\epsilon_{2}}\right)-\exp(i\theta)\textbf{e}_{q}\right\|\leq C(\epsilon_2\epsilon_1^{-1}+\epsilon_{1}^{3/2}\epsilon_2^{-1/2}+\epsilon_1+\epsilon_1^{5/2}\epsilon_2^{-3/2}).
\end{aligned}
\end{equation*}
\begin{proof}[Proof of Proposition \ref{prop:with_second_order}]
Under the same assumption as in Theorem \ref{theorem:main}, we introduce a change of variables as in equation \eqref{eq:first_cv} where the dynamics are characterized by the Hamiltonian $\hat{H}_1$ given in equation \eqref{eq:H_1}. 
Define the set \begin{equation}
    \mathcal{J}''=\mathcal{J}\setminus\left\{(j,j,0)\mid 1\leq j\leq n\right\},
\end{equation}
where $\mathcal{J}$ is defined in equation \eqref{eq:def_J}. The additional assumptions of Proposition \ref{prop:with_second_order} ensure that, for every $(j,k,\sigma)\in\mathcal{J}''$,
\begin{equation*}
    f_{jk}^{\sigma}(t)=\lambda_{j}-\lambda_{k}+\sigma f(\epsilon_1\epsilon_2 t)\neq 0,\quad\forall t\in\left[0,\frac{1}{\epsilon_1\epsilon_2}\right].
\end{equation*}
Then we introduce the following change of variables:
\begin{equation*}
    \hat{\psi}_{1}(t)=\exp(i\epsilon_{1}^{2}\tilde{X}_{2}(t))\tilde{\psi}_{2}(t),
\end{equation*}
where
\begin{equation*}
     \tilde{X}_{2}(t)=\sum_{(j,k,\sigma)\in\mathcal{J}''}\frac{h_{jk}^{\sigma}(\epsilon_{1}\epsilon_{2}t)}{f_{jk}^{\sigma}(\epsilon_{1}\epsilon_{2}t)}B_{jk}\big(\phi_{jk}^{\sigma}(t)\big).
\end{equation*}
Since $h_{jk}^{\sigma}(0)=h_{jk}^{\sigma}(1)=0$ for every $(j,k,\sigma)\in\mathcal{J}''$, we have $\tilde{X}_{2}(0)=\tilde{X}_{2}\left(\frac{1}{\epsilon_1\epsilon_2}\right)=0$. Therefore $\tilde{\psi}_{2}(0)=\hat{\psi}_1(0)=\psi_{I}(0)$ and $\tilde{\psi}_2\left(\frac{1}{\epsilon_1\epsilon_2}\right)=\hat{\psi}_1\left(\frac{1}{\epsilon_1\epsilon_2}\right)=\psi_I\left(\frac{1}{\epsilon_1\epsilon_2}\right)$. By Proposition \ref{prop:change_of_var}, the dynamics of $\hat{\psi}_{2}$ are characterized by the Hamiltonian
\begin{equation*}
\begin{aligned}
    \tilde{H}_{2}(t)=&\hat{H}_{1}(t)+\epsilon_{1}^{2}\frac{\text{d}}{\text{d}t}\tilde{X}_{2}(t)+\mathcal{O}(\epsilon_{1}^{3})\\
=&\epsilon_1\delta_{pq}u(\epsilon_{1}\epsilon_{2}t)A_{pq}\big(\phi_{pq}^{1}(t)\big)+\sum_{j=1}^{n} \epsilon_1^2 h_{jj}^{0}(\epsilon_1\epsilon_2 t)\textbf{e}_{jj}+\mathcal{O}(\epsilon_1^{3}+\epsilon_1^{2}\epsilon_2).
\end{aligned}   
\end{equation*}
Then we introduce the truncation of $\tilde{H}_2(t)$ given by
\begin{equation*}
\begin{aligned}
  \tilde{H}_{\text{rwa}}(t)=\epsilon_1\delta_{pq}u(\epsilon_{1}\epsilon_{2}t)A_{pq}\big(\phi_{pq}^{1}(t)\big)+\sum_{j=1}^{n} \epsilon_1^2 h_{jj}^{0}(\epsilon_1\epsilon_2 t)\textbf{e}_{jj}.
\end{aligned}
\end{equation*}
Denote by $\tilde{\psi}_{\text{rwa}}(t)$ the solution of 
\begin{equation}
\label{eq:dynamics_rwa}
    i\frac{\text{d}}{\text{d}t}\tilde{\psi}_{\text{rwa}}(t)=\tilde{H}_{\text{rwa}}(t)\tilde{\psi}_{\text{rwa}}(t),\quad\tilde{\psi}_{\text{rwa}}(0)=\tilde{\psi}_{2}(0).
\end{equation}
Using a similar reasoning as in the proof of Lemma 
\ref{lemma:RWA_1}, we prove that $ \left\|\hat{\psi}_{\text{rwa}}\left(\frac{1}{\epsilon_1\epsilon_2}\right)-\tilde{\psi}_{I}\left(\frac{1}{\epsilon_1\epsilon_2}\right)\right\|=\mathcal{O}\left(\frac{\epsilon_1^2}{\epsilon_2}+\epsilon_1\right)$.
Then by adiabatic following argument similar to that in the proof of Theorem \ref{theorem:main}, we conclude that there exist $C>0$ and $\eta>0$ such that, for $(\epsilon_1,\epsilon_2)\in(0,\eta)^2$ and $\psi_{\epsilon_1,\epsilon_2}(0)=\textbf{e}_{p}$, 
\begin{equation*}
\begin{aligned}
\min_{\theta\in[0,2\pi]}&\left\|\psi_{I}\left(\frac{1}{\epsilon_1\epsilon_2}\right)-\exp(i\theta)\textbf{e}_q\right\|\leq C\left(\frac{\epsilon_1^2}{\epsilon_2}+\frac{\epsilon_2}{\epsilon_1}\right),\\
\min_{\theta\in[0,2\pi]}&\left\|\psi_{\epsilon_1,\epsilon_2}\left(\frac{1}{\epsilon_1\epsilon_2}\right)-\exp(i\theta)\textbf{e}_q\right\|\leq C\left(\frac{\epsilon_1^2}{\epsilon_2}+\frac{\epsilon_2}{\epsilon_1}\right).
\end{aligned}
\end{equation*}
\end{proof}
\section{Example}
\label{sec:numerical}
Consider the four-level system with drift and control Hamiltonians 
\begin{equation*}
        H(\alpha)=
        \begin{pmatrix}
                         0 & 0 & 0 & 0\\
            0 & 1+\alpha & 0 & 0\\
            0 & 0 & 3+2\alpha & 0\\
            0 & 0 & 0 & 7
        \end{pmatrix}, 
    \ H_c=
    {\begin{pmatrix}
        1 & 1 & 1 & 0\\
        1 & 1 & 2 & 0\\
        1 & 2 & 1 & 3\\
        0 & 0 & 3 & 1
    \end{pmatrix}.}
\end{equation*}
Fix the initial state $\psi_{\epsilon_1,\epsilon_2}(0)=(0,0,1,0)^{\top}$. In order to realize a population inversion between the third and the forth eigenstates, let us fix $T=1$ and use the control law given in Remark~\ref{remark:control_law_construction} { with $(v_0,v_1)=(3,5)$.} We will test the sharpness of conditions for $\alpha\in\{-0.6,-0.3,-0.1,0.1,0.3\}$. We are going to use the time scale $(\epsilon_1,\epsilon_2)=(10^{-5/3},10^{-7/3})$. For $s\in[0,1]$, let us define the fidelity of the inversion as the population on the target state $\textbf{e}_{q}$ at instant $t=s/(\epsilon_1\epsilon_2)$:
\begin{equation*}
    \text{fid}(s)=\left|\left\langle\psi_{\epsilon_1,\epsilon_2}\left(\frac{1}{\epsilon_1\epsilon_2}\right),\textbf{e}_{q}\right\rangle\right|^{2}.
\end{equation*}
Here the target state is $\textbf{e}_{4}=(0,0,0,1)^{\top}$. When $\alpha=-0.3$ or $\alpha=-0.1$, $\lambda_{4}(\alpha)-\lambda_{3}(\alpha)$ falls in the open interval $(v_0,v_1)$, and for $1\leq j<k\leq 4$ with $(j,k)\neq(3,4)$, $\lambda_{k}(\alpha)-\lambda_{j}(\alpha)$ is not in the closed interval $[v_0,v_1]$. When $\alpha=-0.6$, $\lambda_{4}(\alpha)-\lambda_{3}(\alpha)$ is not in $(v_0,v_1)$, violating the hypothesis \ref{cond_1} of Theorem \ref{theorem:main}. When $\alpha=0.1$ or $\alpha=0.3$, $\lambda_{3}(\alpha)-\lambda_{1}(\alpha)$ falls in the closed interval $[v_0,v_1]$ and the hypothesis \ref{cond_2} of Theorem~\ref{theorem:main} is violated. See the results in Figure~\ref{fig:4_level_example_1}.\\
\begin{figure}[thp]
    \centering
    \begin{subfigure}{0.4\textwidth}
    \includegraphics[width=\textwidth]{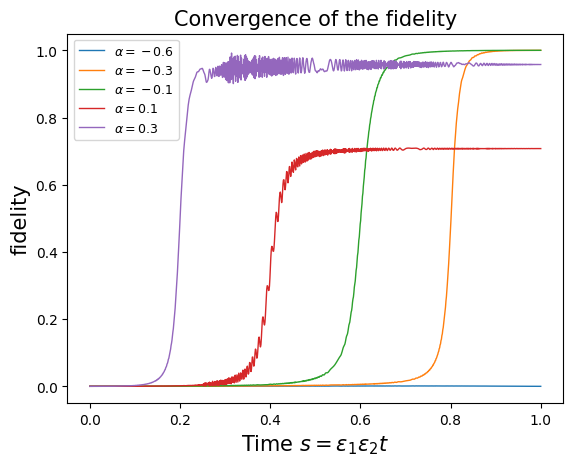}
    \caption{Convergence of $\text{fid}(s)$ }
\label{fig:simulation_1}
    \end{subfigure}
    \hfill
    \begin{subfigure}{0.4\textwidth}
    \includegraphics[width=\textwidth]{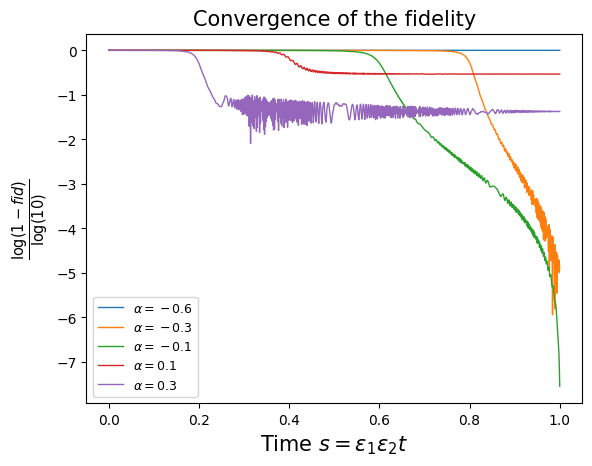}
    \caption{Logarithmic convergence of $1-\text{fid}(s)$}
\label{fig:simulation_2}
\end{subfigure}
\caption{Simulations for $\epsilon_1=10^{-5/3}$, $\epsilon_2=10^{-7/3}$, and $\alpha\in\{-0.6,-0.3,-0.1,0.1,0.3\}$: To better illustrate the convergence, we also show the logarithmic convergence of $1-\text{fid}(s)$, specifically $\log(1-\text{fid}(s))/\log10$. The convergence is only guaranteed when the hypotheses of Theorem~\ref{theorem:main} are satisfied. When 
$\lambda_{4}(\alpha)-\lambda_{3}(\alpha)$ is not in $(v_0,v_1)$, the population inversion between the third and the fourth eigenstates does not occur at all (see the blue curve). When $\lambda_{3}(\alpha)-\lambda_{1}(\alpha)$ is in $[v_0,v_1]$, {  the fidelity stays quite far from 1 and a total population inversion is not accomplished} (see the purple and red curves).} 
\label{fig:4_level_example_1}
\end{figure}
Finally, let us test if it is possible to realize a population inversion between $\textbf{e}_1$ and $\textbf{e}_4$ by successive inversions between $(\textbf{e}_1,\textbf{e}_2)$, between $(\textbf{e}_2,\textbf{e}_3)$, and between $(\textbf{e}_3,\textbf{e}_4)$. Notice that a direct inversion between $\textbf{e}_1$ and $\textbf{e}_4$ is impossible since $\delta_{14}=\langle \textbf{e}_1,H_{c}\textbf{e}_{4}\rangle=0$. Fix $\alpha=-0.1$ for this test. To realize the inversion between $\textbf{e}_1$ and $\textbf{e}_2$, let us choose $(v_0,v_1)=(0.5,1.5)$ and construct a control law $\omega_{\epsilon_1,\epsilon_2}^{1}$
defined on $[0,1/(\epsilon_1\epsilon_2)]$ as described in Remark \ref{remark:control_law_construction}. It can be easily verified that the assumptions in Theorem \ref{theorem:main} are satisfied. Similarly, we choose $(v_0,v_1)=(1.5,2.5)$ to construct $\omega_{\epsilon_1,\epsilon_2}^{2}$ for the inversion between $(\textbf{e}_2,\textbf{e}_3)$ and $(v_0,v_1)=(3,5)$ to construct $\omega_{\epsilon_1,\epsilon_2}^{3}$ for the inversion between $(\textbf{e}_3,\textbf{e}_4)$. A concatenation of these three control laws, defined on $\left[0,\frac{3}{\epsilon_1\epsilon_2}\right]$, is given by
\begin{equation*}
    \omega_{\epsilon_1,\epsilon_2}: t\mapsto\left\{\begin{matrix}
       \omega_{\epsilon_{1}\epsilon_{2}}^{1}(t),& \text{if }t\in\left[0,\frac{1}{\epsilon_1\epsilon_2}\right),\\
       \omega_{\epsilon_{1}\epsilon_{2}}^{2}\left(t-\frac{1}{\epsilon_1\epsilon_2}\right),& \text{if }t\in\left[\frac{1}{\epsilon_1\epsilon_2},\frac{2}{\epsilon_1\epsilon_2}\right),\\
       \omega_{\epsilon_{1}\epsilon_{2}}^{3}\left(t-\frac{2}{\epsilon_1\epsilon_2}\right),& \text{if }t\in\left[\frac{2}{\epsilon_1\epsilon_2},\frac{3}{\epsilon_1\epsilon_2}\right].
    \end{matrix}\right.
\end{equation*}
\begin{figure}[htp]
\includegraphics[width=0.6\linewidth]{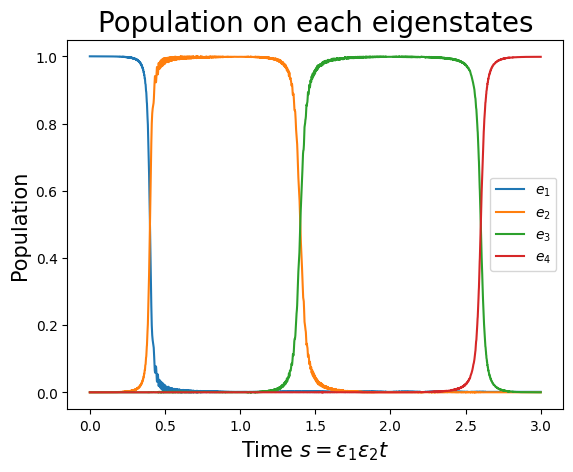}
\centering
\caption{Simulations for $\alpha=-0.1$, $\epsilon_{1}=10^{-5/3},\epsilon_{2}=10^{-7/3}$: Population inversions between $(\textbf{e}_1,\textbf{e}_2)$, $(\textbf{e}_2,\textbf{e}_3)$, and $(\textbf{e}_3,\textbf{e}_4)$ happen successively.}
\label{fig:population_inversion}
\end{figure}
The numerical result is given in Figure \ref{fig:population_inversion} with the time scale $(\epsilon_1,\epsilon_2)=(10^{-5/3},10^{-7/3})$, showing the efficiency of the algorithm.
\section{Conclusion}
In this study, we introduced an algorithm capable of realizing population inversion between two arbitrary eigenstates for a continuum of quantum systems. We underlined the importance of the non-overlapping condition on some characteristic frequencies for this algorithm's validity.
Future investigations could explore the possibility of proposing weaker conditions for convergence, extending similar results to infinite-dimensional quantum systems and examining which further controllability results (i.e. population splitting) could be obtained in addition to population inversion.
\begin{section}{Appendix}
\begin{proof}[Proof of Lemma \ref{lemma:int_theta}]
   By the definition of $\theta_{\epsilon_1}$ in \ref{eq:def_theta}, we have that
   \begin{equation}
   \label{eq:angular_acceleration}
       \dot{\theta}_{\epsilon}(s)=\delta_{pq}\epsilon_1\frac{(\Delta-f(s))\dot{u}(s)+\dot{f}(s)u(s)}{2\lambda_{\epsilon_1}(s)^2}.
   \end{equation}
    The conditions on $u$ and $f$ in Theorem \ref{theorem:main} ensure that there exist $\sigma>0$ and $M_1>0$ such that  $\left[\bar{s}(\alpha)-\sigma,\bar{s}(\alpha)+\sigma\right]\subset[0,1]$,
        $(\Delta-f(s))\dot{u}(s)+\dot{f}(s)u(s)>0$ for all $s\in\left[\bar{s}(\alpha)-\sigma,\bar{s}(\alpha)+\sigma\right]$,
    and $|\Delta-f(s)|>M_1$ for all $s\in[0,1]\setminus\left[\bar{s}(\alpha)-\sigma,\bar{s}(\alpha)+\sigma\right]$.
    Then, by equation \eqref{eq:angular_acceleration}, we deduce that $\text{sgn}(\dot{\theta}_{\epsilon_1}(s))=\text{sgn}(\delta_{pq})$ for all $s\in[\bar{s}(\alpha)-\sigma,\bar{s}(\alpha)+\sigma]$. Since $\dot{\theta}_{\epsilon}(\cdot)$ does not change sign on $[\bar{s}(\alpha)-\sigma,\bar{s}(\alpha)+\sigma]$, we obtain that
    \begin{equation*}
    \begin{aligned}
        \int_{\bar{s}(\alpha)-\sigma}^{\bar{s}(\alpha)+\sigma}\left|\dot{\theta}_{\epsilon_1}(s)\right|\text{d}s=&\left|\int_{\bar{s}(\alpha)-\sigma}^{\bar{s}(\alpha)+\sigma}\dot{\theta}_{\epsilon_1}(s)\text{d}s\right|=\left|\theta_{\epsilon_1}(\bar{s}(\alpha)+\sigma)-\theta_{\epsilon_1}(\bar{s}(\alpha)-\sigma)\right|\leq\pi.
    \end{aligned}
    \end{equation*}
   For all $s\in[0,1]\setminus\left[\bar{s}(\alpha)-\sigma,\bar{s}(\alpha)+\sigma\right]$, by  equation~\eqref{eq:def_lambda}, we have $ \lambda_{\epsilon_1}(s)>\frac{1}{2}\left|\Delta-f(s)\right|\geq\frac{1}{2}M_1$. Then by equation \eqref{eq:angular_acceleration}, we deduce that there exists $M_2>0$ such that $|\dot{\theta}_{\epsilon_1}(s)|\leq M_2\epsilon_1$ for all $s\in[0,1]\setminus\left[\bar{s}(\alpha)-\sigma,\bar{s}(\alpha)+\sigma\right]$. Therefore,
\begin{equation*}
\begin{aligned}
    \int_{0}^{1}\left|\dot{\theta}_{\epsilon_1}(s)\right|\text{d}s=&{}\int_{0}^{\bar{s}(\alpha)}\left|\dot{\theta}_{\epsilon_1}(s)\right|\text{d}s+\int_{\bar{s}(\alpha)-\sigma}^{\bar{s}(\alpha)+\sigma}\left|\dot{\theta}_{\epsilon_1}(s)\right|\text{d}s+\int_{\bar{s}(\alpha)+\sigma}^{1}\left|\dot{\theta}_{\epsilon_1}(s)\right|\text{d}s\\ <{}&\pi+(1-2\delta)M_2\epsilon_1.
\end{aligned}
\end{equation*}
We conclude that there exist $\eta>0$ and $M>0$ such that, for all $\epsilon_1\in(0,\eta)$, we have $ \int_{0}^{1}\left|\dot{\theta}_{\epsilon_1}(s)\right|\text{d}s<M$.
\end{proof}
\begin{proof}[Proof of Lemma \ref{lemma:function_integral_1}]
For $\epsilon_1>0$ and $s\in[0,1]$, we have
    \begin{equation*}
    \begin{aligned}
    \int_{0}^{s}\left|\dot{F}_{\epsilon_1}(\xi)\right|\text{d}\xi\leq&\int_{0}^{s}\left|g(\theta_{\epsilon_1}(\xi))\dot{h}(\xi)\right|\text{d}\tau+\int_{0}^{s}\left|\dot{g}(\theta_{\epsilon_1}(\xi))\dot{\theta}_{\epsilon_1}(\xi)h(\xi)\right|\text{d}\xi
    \\
\leq&\|\dot{g}\|_{\infty}\|h\|_{\infty}\int_{0}^{s}\left|\dot{\theta}_{\epsilon_1}(\xi)\right|\text{d}\xi+s\|g\|_{\infty}\|\dot{h}\|_{\infty}.
    \end{aligned}
    \end{equation*}
    By Lemma \ref{lemma:int_theta}, there exists $M>0$ such that, for all $s\in[0,1]$, $ \int_{0}^{s}\left|\dot{\theta}_{\epsilon_1}(\xi)\right|\text{d}\xi\leq\int_{0}^{1}\left|\dot{\theta}_{\epsilon_1}(\xi)\right|\text{d}\xi<M$. We can conclude the proof by choosing $c=M\|\dot{g}\|_{\infty}\|h\|_{\infty}+\|g\|_{\infty}\|\dot{h}\|_{\infty}$.
\end{proof}
\begin{proof}[Proof of Lemma \ref{lemma:van_der_corput_1}]
    By equation \eqref{eq:def_lambda},
    \begin{equation*}
         -2\dot{\lambda}_{\epsilon_1}(s)=\frac{(\Delta(\alpha)-f(s))\dot{f}(s)-4\epsilon_1^2\delta_{pq}^2u(s)\dot{u}(s)}{2\lambda_{\epsilon_1}(s)}.
    \end{equation*}
    Take $s\in[0,\bar{s}(\alpha)]$. By the definition of $\bar{s}$ in \eqref{eq:def_s_bar} and the assumption of Theorem \ref{theorem:main} on $f$, we have $\Delta(\alpha)-f(s)\geq 0$ and $\dot{f}(s)>0$. Then, we can easily deduce that
\begin{equation*}\begin{aligned}-2\dot{\lambda}_{\epsilon_1}(s)+\dot{f}(s)\geq{}&\frac{-4\epsilon_1^2\delta_{pq}^2u(s)\dot{u}(s)}{\lambda_{\epsilon_1}(s)}+\dot{f}(s)\\\geq{}&\frac{-4\epsilon_1^2\delta_{pq}^2u(s)\|\dot{u}\|_{\infty}}{\sqrt{(\Delta(\alpha)-f(s))^2+4\epsilon_1^2\delta_{pq}^2 u(s)^2}}+\dot{f}(s)\\\geq{}&-2\epsilon_1|\delta_{pq}|\|\dot{u}\|_{\infty}+\dot{f}(s).
    \end{aligned}
    \end{equation*}
Take $s\in[\bar{s}(\alpha),1]$. By equation \eqref{eq:def_lambda} and triangle inequality, we have that
\begin{equation*}
    2\lambda_{\epsilon_1}(s)\leq\left|\Delta(\alpha)-f(s)\right|+2\epsilon_1|\delta_{pq}|u(s).
\end{equation*}
Since $f(s)\geq\Delta(\alpha)$ on $[\bar{s}(\alpha),1]$, we can deduce that
\begin{equation*}
    \begin{aligned}
        -2\lambda_{\epsilon_1}(s)+f(s)&\geq -\left|\Delta(\alpha)-f(s)\right|-2\epsilon_1|\delta_{pq}|u(s)+f(s)\\
        &=-(f(s)-\Delta(\alpha))+f(s)-2\epsilon_1|\delta_{pq}|u(s)=\Delta(\alpha)-2\epsilon_1|\delta_{pq}|u(s).
    \end{aligned}
\end{equation*}
By the assumptions of Theorem \ref{theorem:main}, there exists $v_0>0$ such that $\Delta(\alpha)>v_0$ for all $\alpha\in\mathcal{D}$. We conclude the proof by choosing $ \eta=\min\left(\frac{\min|\dot{f}(s)|}{4|\delta_{pq}|\|\dot{u}\|_{\infty}},\frac{v_0}{4|\delta_{pq}|\|u\|_{\infty}}\right)$.
\end{proof}
\begin{proof}[Proof of Lemma \ref{lemma:van_der_corput_2}]
    By equation \eqref{eq:def_lambda} and Cauchy-Schwarz inequality, we have that
    \begin{equation*}
    \begin{aligned}
        \left|\dot{\lambda}_{\epsilon_1}(s)\right|={}&\left|\frac{-\frac{1}{2}(\Delta(\alpha)-f(s))\dot{f}(s)+2\epsilon_1^2\delta_{pq}^2u(s)\dot{u}(s)}{\sqrt{(\Delta(\alpha)-f(s))^2+4\epsilon_1^2\delta_{pq}^2 u(s)^2}}\right|\\
        \leq {}& \sqrt{\frac{1}{4}(\dot{f}(s))^2+\epsilon_1^2\delta_{pq}^2(\dot{u}(s))^2}\leq {} \frac{1}{2}\left|\dot{f}(s)\right|+\epsilon_1|\delta_{pq}|\|\dot{u}\|_{\infty}.
    \end{aligned}
    \end{equation*}
    We conclude the proof by choosing $\eta=\frac{\min\dot{f}(s)}{2|\delta_{pq}|\|\dot{u}\|_{\infty}}$.
\end{proof}
\end{section}
\section*{Acknowledgments}                          
\noindent This work has been partly supported by  the ANR-DFG project “CoRoMo”
ANR-22-CE92-0077-01. This project has received financial support from the
CNRS through the MITI interdisciplinary programs. 
\bibliographystyle{plain}  
\bibliography{reference}
\end{document}